\def\@cite#1#2{[{{\bfseries #1}\if@tempswa , #2\fi}]}
\renewcommand{\section}{%
\@startsection{section}{1}{\z@}
{0.5truecm plus -1ex minus -.2ex}%
{1.0ex plus .2ex}{\bfseries\large}}
\def\@seccntformat#1{\csname the#1\endcsname.\ }
\numberwithin{equation}{section} 
\newtheorem{thm}{Theorem}[section]
\newtheorem{corollary}[thm]{Corollary}
\newtheorem{lem}[thm]{Lemma}
\theoremstyle{definition}
\newtheorem{remark}{Remark}[section]
\newcommand{\ep}{\varepsilon}
\newcommand{\pa}{\partial}
\newcommand{\Rn}{\mathbb{R}^n}
\newcommand{\vtheta}{\vartheta}
\newcommand{\ol}[1]{\overline{#1}}
\newcommand{\tmax}{T_{\rm max,\lambda}}
\newcommand{\lp}[2]{\left\Vert{#2}\right\Vert_{L^{#1}(\Omega)}}
\newcommand{\cd}{(\cdot,t)}
\newcommand{\into}{\int_\Omega}
\newcommand{\obar}{\overline{\Omega}}
\newcommand{\utau}{u_\lambda}
\newcommand{\vtau}{v_\lambda}
\newcommand{\lambdan}{\lambda_n}
\newcommand{\lambdanj}{\lambda_{n_j}}
\newcommand{\utaunj}{u_{\lambda_{n_j}}}
\newcommand{\vtaunj}{v_{\lambda_{n_j}}}
\newcommand{\vphi}{\varphi_{r}}
\newcommand{\lambdaz}{\lambda_0}
\newcommand{\itau}{\lambda \in (0,\lambdaz)}
\newcommand{\uinit}{u_{\rm init}}
\newcommand{\vinit}{v_{\rm init}}
\newcommand{\uobar}{\overline{u}}
\newcommand{\vobar}{\overline{v}}
\begin{document}
\footnote[0]
    {2010{\it Mathematics Subject Classification}\/. 
    Primary: 35A09; Secondary:  35K51, 35J15, 92C17.
    }
\footnote[0]
    {{\it Key words and phrases}\/: 
     chemotaxis system with strong signal sensitivity; parabolic-parabolic system; parabolic-elliptic system.  
    }
\begin{center}
    \Large{{\bf 
    The fast signal diffusion limit in a chemotaxis system with strong signal sensitivity
    }}
\end{center}
\vspace{5pt}
\begin{center}
    Masaaki Mizukami\footnote{Partially supported by 
    JSPS Research Fellowships for Young Scientists (No. 17J00101).} 
   \footnote[0]{
    E-mail: 
    {\tt masaaki.mizukami.math@gmail.com} 
    }\\
    \vspace{12pt}
    Department of Mathematics, 
    Tokyo University of Science\\
    1-3, Kagurazaka, Shinjuku-ku, 
    Tokyo 162-8601, Japan\\
    \vspace{2pt}
\end{center}
\begin{center}    
    \small \today
\end{center}
\vspace{2pt}
\newenvironment{summary}
{\vspace{.5\baselineskip}\begin{list}{}{%
     \setlength{\baselineskip}{0.85\baselineskip}
     \setlength{\topsep}{0pt}
     \setlength{\leftmargin}{12mm}
     \setlength{\rightmargin}{12mm}
     \setlength{\listparindent}{0mm}
     \setlength{\itemindent}{\listparindent}
     \setlength{\parsep}{0pt}
     \item\relax}}{\end{list}\vspace{.5\baselineskip}}
\begin{summary}
{\footnotesize {\bf Abstract.}
This paper gives a first insight into making a mathematical bridge 
between the parabolic-parabolic signal-dependent chemotaxis system 
and its parabolic-elliptic version. 
To be more precise, this paper deals with 
convergence of a solution for 
the parabolic-parabolic chemotaxis system with 
strong signal sensitivity 
\begin{align*}
  (\utau)_t = \Delta \utau - \nabla \cdot (\utau \chi(\vtau)\nabla \vtau), 
\quad 
  \lambda (\vtau)_t = \Delta \vtau - \vtau +\utau
  \quad  
 \mbox{in} \ \Omega\times (0,\infty)
\end{align*}
to that for the parabolic-elliptic chemotaxis system 
\begin{align*}
 u_t = \Delta u -\nabla \cdot (u\chi(v)\nabla v), 
 \quad 
 0= \Delta v -v +u 
\quad  
 \mbox{in} \ \Omega\times (0,\infty), 
\end{align*}
where $\Omega$ is a bounded domain in $\mathbb{R}^n$ ($n\in\mathbb{N}$) with smooth boundary, $\lambda>0$ is a constant and $\chi$ is a function generalizing 
\[ 
  \chi(v) = \frac{\chi_0}{(1+v)^k} 
  \quad (\chi_0>0,\ k>1).
\] 
In chemotaxis systems 
parabolic-elliptic systems often provided some guide to 
methods and results for 
parabolic-parabolic systems. 
However,  the relation between parabolic-elliptic systems and parabolic-parabolic systems 
has not been studied. 
Namely, it still remains to analyze on the following question: 
{\it Does a solution of the parabolic-parabolic system converge to 
that of the parabolic-elliptic system as $\lambda \searrow 0$?} 
This paper gives some positive answer in 
the chemotaxis system with strong signal  sensitivity. 
}
\end{summary}
\vspace{10pt} 

\newpage
%
%

\section{Introduction}

{\it Partial differential equation} is one of topics of mathematical analysis, 
and many mathematicians study variations of partial differential equations intensively.  
Here there are several types of partial differential equations, 
e.g., {\it parabolic partial differential equation} 
and {\it elliptic partial differential equation}, 
and these often describe many phenomena 
which appear in natural science, especially, physics, chemistry and biology. 
Therefore partial differential equations frequently play an important role 
in analysis of some phenomenon, 
moreover, in some case elliptic partial differential equations help 
analysis of parabolic partial differential equations, 
e.g., analysis of steady states or simplified equations of 
parabolic partial differential equations. 
Here we focus on biological phenomena, especially {\it chemotaxis} 
which is one of important properties and is related to e.g., 
the movement of sperm, the migration of neurons or lymphocytes and the tumor invasion. 
Chemotaxis is the property such that species 
move towards higher concentration of a chemical substance 
when they plunge into hunger. 
One of examples of species 
which have chemotaxis is {\it Dictyostelium discoideum}. 
Keller--Segel \cite{K-S} studied the aggregation of {\it Dictyostelium discoideum} 
due to an attractive chemical substance, 
and proposed the following system of partial differential equations 
\begin{align*}
  u_t = \Delta u - \chi \nabla \cdot (u\nabla v), 
\quad  
  \lambda v_t = \Delta v -v +u 
\end{align*}
in $\Omega\times(0,\infty)$,  
where 
$\Omega\subset \mathbb{R}^n$ ($n\in \mathbb{N}$) is a bounded domain, 
$\chi>0$, $\lambda=0$ ({\it parabolic-elliptic system}) or 
$\lambda>0$ ({\it parabolic-parabolic system}). 
The above system is called as {\it Keller--Segel system} or {\it chemotaxis system}. 
In this problem the parabolic-elliptic system was first investigated, 
and then the result for the parabolic-elliptic system provided us some conjecture 
of results for the parabolic-parabolic system  
and the interaction between the parabolic-elliptic system 
and the parabolic-parabolic system 
made progress on researches of the Keller--Segel system. 
In the two-dimensional parabolic-elliptic case 
Nagai \cite{Nagai_1995} showed a condition for 
existence of global bounded solutions or blow-up solutions 
in the radially symmetric situation, which tells us that 
the size of initial data will determine whether classical solutions of the above system 
exist globally or not. 
After the above pioneering work,  the Keller--Segel system was studied intensively; 
conditions for global existence or blow-up 
in the parabolic-elliptic system were studied in 
\cite{Nagai_2001,Nagai-Senba-Yoshida} and in the parabolic-parabolic system 
were investigated in 
\cite{Biler_1998,Xinru_higher,Gajewski-Zacharias_1998,Herrero-Velazques_1997,Horstmann-Wang,Nagai-Senba-Yoshida,win_aggregationvs}; 
blow-up asymptotics of solutions 
for the parabolic-elliptic system is in \cite{Senba_2007,Senba-Suzuki_2001}
and for the parabolic-parabolic system is in 
\cite{Herrero-Velazques_1997,Mizoguchi-Souplet_2014,Nagai-Senba-Suzuki_2000}. 
More related works can be found in \cite{Bai-Winkler_2016,Black-Lankeit-Mizukami_01,Lin-Mu-Wang,Mizukami_DCDSB,Mizukami_PPEpro,stinner_tello_winkler,Tello-Winkler_2007,Tello_Winkler_2012,Winkler_2010_logistic}; 
a chemotaxis system with logistic term 
in the parabolic-elliptic case is in \cite{Tello-Winkler_2007} and  
in the parabolic-parabolic case is in \cite{Winkler_2010_logistic}; 
global existence and stabilization in a two-species chemotaxis-competition system 
were shown in the parabolic-parabolic-elliptic case 
(\cite{Black-Lankeit-Mizukami_01,Mizukami_PPEpro,
stinner_tello_winkler,Tello_Winkler_2012}) and 
in the parabolic-parabolic-parabolic case 
(\cite{Bai-Winkler_2016,Lin-Mu-Wang,Mizukami_DCDSB}). 

Moreover, the chemotaxis system with signal-dependent sensitivity 
\begin{align*}
  u_t = \Delta u -\nabla \cdot (u \chi(v)\nabla v), 
  \quad 
  \lambda v_t = \Delta v -v+u  
\end{align*}
in $\Omega\times(0,\infty)$,  
where 
$\Omega\subset \mathbb{R}^n$ ($n\in \mathbb{N}$) is a bounded domain, 
$\lambda\ge 0$ is a constant and 
$\chi$ is a nonnegative function generalizing 
\begin{align*}
  \chi(v)= \frac{\chi_0}{v}
 \quad \mbox{and} \quad 
\chi(v)=\frac{\chi_0}{(1+v)^k}\quad  (\chi_0>0,\ k>1), 
\end{align*}
was also studied in the parabolic-elliptic case firstly, 
and then researches of the above system were developed by the interaction 
between the parabolic-elliptic system and the parabolic-parabolic system. 
In the parabolic-elliptic system with $\chi(v)=\frac{\chi_0}{v}$ ($\chi_0>0$) 
Nagai--Senba \cite{Nagai-Senba_1998} showed that 
if $n=2$, or $n\ge 3$ and $\chi_0<\frac{2}{n-2}$ then a radial solution is global and bounded, 
and if $n\ge 3$ and $\chi_0>\frac{2n}{n-2}$ then there exists some initial data such that 
a radial solution blows up in finite time. 
In the nonradial case Biler \cite{Biler_1999} obtained global existence of solutions to 
the parabolic-elliptic system with $\chi(v)=\frac{\chi_0}{v}$ ($\chi_0>0$) 
under the conditions that $n=2$ and $\chi_0\le 1$, or $n\ge 3$ and $\chi_0<\frac{2}{n}$.  
Thanks to these results, we can expect that conditions for global existence in 
the above system were 
determined by a dimension of a domain and  
a smallness of $\chi$ in some sense. 
Indeed, global existence and boundedness of 
solutions to the parabolic-elliptic system 
with $\chi(v)=\frac{\chi_0}{v^k}$ ($\chi_0>0$, $k\ge 1$) 
were derived 
under some smallness conditions for $\chi_0$ (\cite{Fujie-Winkler-Yokota_pe}). 
Moreover, Fujie--Senba \cite{F-S;p-e} established global existence and boundedness 
in the two-dimensional parabolic-elliptic system with more general sensitivity function. 
On the other hand, also in the parabolic-parabolic case, it was shown that 
some smallness condition for $\chi$ implies global existence and boundedness; 
in the case that $\chi(v)=\frac{\chi_0}{v}$ ($\chi_0>0$) 
Winkler \cite{Winkler_2011} obtained global existence of classical solutions 
under the condition that $\chi_0<\sqrt{\frac{2}{n}}$ 
and Fujie \cite{Fujie_2015} established boundedness of these solutions;  
in the case that $\chi(v)\le \frac{\chi_0}{(a+v)^k}$ ($\chi_0>0$, $a\ge0$, $k>1$) 
some smallness condition for $\chi_0$ leads to 
global existence and boundedness (\cite{Mizukami-Yokota_02}); 
recently, Fujie--Senba \cite{F-S;p-p} showed global existence and boundedness of 
radially symmetric solutions to the parabolic-parabolic system 
with more general sensitivity function and small $\lambda$ 
in a two-dimensional ball. 

In summary, in the setting that $\Omega$ is a bounded domain, 
parabolic-elliptic chemotaxis systems 
often provided us some guide to 
how we could deal with parabolic-parabolic chemotaxis systems;  
however, the relation between the both systems has not been studied. 
Namely, in the setting that $\Omega$ is a bounded domain, 
it still remains to analyze on the following question: 
\begin{center}
{\it Does a solution of the parabolic-parabolic system converge to\\ 
that of the parabolic-elliptic problem as $\lambda \searrow 0$?} 
\end{center}
If we can obtain some positive answer to this question, 
then we can see that solutions of both systems have some similar properties; 
thus an answer will enable us to establish 
approaches to obtain properties 
for solutions of the chemotaxis systems.  %
Here, in the setting that $\Omega$ is the whole space $\mathbb{R}^n$, 
there are some positive answers to this question in $2$-dimensional case (\cite{Raczynski_2009}) 
and $n$-dimensional case (\cite{Lemarie_2013}). 
Therefore we can expect a positive answer to this question also in the setting that 
$\Omega$ is a bounded domain.  
In order to obtain an answer to this question 
we first deal with the chemotaxis system with strong signal sensitivity, 
because we have already provided all tools to establish  
the $L^\infty (\Omega \times [0,\infty))$-estimate for solutions via a priori estimate 
in \cite{Mizukami-Yokota_02} 
(This is likely to enable us to see a uniform-in-$\lambda$ estimate). 
The purpose of this work is to obtain some positive answer to this question 
in the chemotaxis system with strong signal sensitivity.  

%
%
In this paper we consider convergence of a solution for 
the parabolic-parabolic chemotaxis system with 
signal-dependent sensitivity 
\begin{equation}\label{cp}
  \begin{cases}
    (\utau)_t = \Delta \utau 
    - \nabla \cdot (\utau\chi(\vtau) \nabla \vtau),
    & x\in\Omega,\ t>0, 
\\[1mm]
    \lambda (\vtau)_t = \Delta \vtau - \vtau + \utau, 
    & x\in\Omega,\ t>0, 
\\[1mm]
    \nabla \utau\cdot \nu = 
    \nabla \vtau\cdot \nu = 0, 
    & x\in\pa \Omega,\ t>0,
\\[1mm]
    \utau(x,0)=\uinit(x),\ \vtau(x,0)=\vinit(x),
    & x\in\Omega
  \end{cases}
\end{equation}
to that of the parabolic-elliptic chemotaxis system  
\begin{equation}\label{cpp-e}
  \begin{cases}
    u_t = \Delta u 
    - \nabla \cdot (u\chi(v) \nabla v),
    & x\in\Omega,\ t>0, 
\\[1mm]
    0 = \Delta v - v + u, 
    & x\in\Omega,\ t>0, 
\\[1mm]
    \nabla u \cdot \nu = 
    \nabla v\cdot \nu = 0, 
    & x\in\pa \Omega,\ t>0,
\\[1mm]
    u(x,0)=\uinit(x),
    & x\in\Omega, 
  \end{cases}
\end{equation}
where $\Omega$ is a bounded domain in $\Rn$ 
($n\ge 2$) 
with smooth boundary $\pa \Omega$ 
and $\nu$ is 
the 
outward normal vector to $\pa\Omega$; 
$\lambda > 0$ is a constant; 
the initial functions $\uinit,\vinit$ are 
assumed to be nonnegative functions; 
the sensitivity function $\chi$ is assumed to be 
generalization of the regular function: 
\[
  \chi(s)=\frac{\chi_0}{(1+s)^k} \quad 
  (s>0),
\]
where $\chi_0>0$ and $k>1$ are constants. 
The unknown functions $\utau (x,t)$ and $u(x,t)$   
represent the population density of the species and 
$\vtau (x,t)$ and $v(x,t)$ show the concentration of the 
chemical substance 
at place $x$ and time $t$. 

%
%
%
%
Now the main results 
read as follows. 
We suppose that the sensitivity function $\chi$ 
satisfies that 
\begin{align}\label{mainass}
\chi\in C^{1+\vtheta}(0,\infty)\quad \mbox{and}\quad 
0\le \chi(s) \le \frac{\chi_0}{(a+s)^k} 
\quad (s>0)
\end{align}
with some $\vtheta\in (0,1)$, $a\ge 0$, $k>1$ 
and $\chi_0>0$. 
The first theorem is concerned with 
global existence and boundedness 
in \eqref{cp} under a condition depending on $\lambda$. 
\begin{thm}\label{mainth} 
Let $\Omega$ be a bounded domain in $\Rn$ 
$(n\ge 2)$ 
with smooth boundary, 
and let $\lambda > 0$. 
Assume that $\chi$ satisfies \eqref{mainass} 
with some $\vtheta\in (0,1)$, $a\ge0$, $k>1$ and $\chi_0>0$ satisfying  
\begin{align}\label{mainass2}
  \chi_0 < \frac{4k(a+\eta)^{k-1}}
  {(1-\lambda)_{+}n + \sqrt{n(n\lambda^2-2n\lambda +n + 8\lambda)}}, 
\end{align}
where 
\begin{align*}
  \eta:=
  \sup_{\tau>0}\left(
  \min\left\{
  e^{-2\tau} \min_{x\in\overline{\Omega}}\vinit (x),\ 
  c_0\lp{1}{\uinit}
  (1-e^{-\tau})
  \right\}
  \right)
\end{align*}
and a constant $c_0>0$ 
is a lower bound for the fundamental solution of 
$w_t = \Delta w -w$ with Neumann boundary condition. 
Then for all 
$\uinit, \vinit$ satisfying 
\begin{align}\label{ini} 
0\leq \uinit\in C(\ol{\Omega})\setminus \{0\}, \quad
\begin{cases}
0< \vinit\in W^{1,q}(\Omega) \ (\exists \, q>n) 
& (a=0),\\
0\leq \vinit \in W^{1,q}(\Omega)\setminus \{0\}\ 
(\exists \, q>n) 
& (a \neq 0), 
\end{cases}
\end{align} 
the problem \eqref{cp} possesses 
a unique global solution 
\[
  \utau,\vtau \in 
  C(\overline{\Omega}\times [0,\infty))
  \cap 
  C^{2,1}(\overline{\Omega}\times (0,\infty))
\]
satisfying that there exists $C>0$ such that 
\[
  \lp{\infty}{\utau\cd}+\|\vtau\cd\|_{W^{1,q}(\Omega)}\le C
\]
for all $t>0$. 
\end{thm}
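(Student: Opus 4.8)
The plan is to follow the standard three-step scheme — local existence, uniform-in-time a priori estimates, and extension to a global bounded solution — where the a priori estimate is the only genuinely new ingredient, since it must be sharp enough to reproduce the explicit $\lambda$-dependent threshold \eqref{mainass2}. \emph{Step 1 (local existence and extensibility).} First I would construct a unique classical solution on a maximal interval $[0,\tmax)$ by a contraction-mapping argument in a suitable closed subset of $C(\obar\times[0,T])$, as in \cite{Mizukami-Yokota_02}, together with the standard extensibility criterion: if $\tmax<\infty$, then $\lp{\infty}{\utau\cd}+\|\vtau\cd\|_{W^{1,q}(\Omega)}\to\infty$ as $t\nearrow\tmax$. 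The whole problem then reduces to bounding this quantity on $[0,\tmax)$ by a constant independent of $t$.

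\emph{Step 2 (mass conservation and lower bound for $\vtau$).} Integrating the first equation and using the no-flux condition gives conservation of mass, $\into\utau\cd=\into\uinit$ for every $t$. A comparison argument in the second equation then yields, for each $\tau>0$, a pointwise lower bound $\vtau\ge\min\{e^{-2\tau}\min_{\obar}\vinit,\ c_0\lp{1}{\uinit}(1-e^{-\tau})\}$: the first term is what survives from the decay of the initial data once the nonnegative source $\utau$ is discarded, and the second is produced by retaining $\utau$ and invoking the fundamental-solution lower bound $c_0$ together with mass conservation. Taking the supremum over $\tau$ gives $\vtau(x,t)\ge\eta$, which turns \eqref{mainass} into the usable pointwise bound $\chi(\vtau)\le\chi_0(a+\eta)^{-k}$.

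\emph{Step 3 (the core $L^p$-estimate).} Testing the first equation with $p\,\utau^{p-1}$ for large $p$ gives
\[
  \frac{d}{dt}\into\utau^p
  = -\frac{4(p-1)}{p}\into|\nabla\utau^{p/2}|^2
  + p(p-1)\into\utau^{p-1}\chi(\vtau)\,\nabla\utau\cdot\nabla\vtau,
\]
and the crux is to absorb the chemotactic cross term into the two available dissipation terms. Following \cite{Mizukami-Yokota_02}, I would couple this identity with the evolution of a weighted functional of $\vtau$ whose weight exponent is tied to $k$, and eliminate $\Delta\vtau$ by substituting $\Delta\vtau=\lambda(\vtau)_t+\vtau-\utau$ from the second equation. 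This matches the unsigned terms against $\into|\nabla\utau^{p/2}|^2$ and the $\vtau$-dissipation; completing the square in $\nabla\utau^{p/2}$ and $\utau^{p/2}\nabla\vtau$, with the $\lambda(\vtau)_t$ contribution feeding the coefficients, leaves a quadratic condition whose solvability is precisely \eqref{mainass2} — the radical $\sqrt{n(n\lambda^2-2n\lambda+n+8\lambda)}$ being the root of that quadratic, and the factor $(a+\eta)^{k-1}$ entering through the bound on $\chi(\vtau)$ from Step 2. Under \eqref{mainass2} one thus obtains, after controlling the remaining lower-order contributions by the Gagliardo--Nirenberg inequality and mass conservation, a differential inequality of the form $\frac{d}{dt}\into\utau^p\le C_1-C_2\into\utau^p$, hence a uniform bound $\lp{p}{\utau\cd}\le C$ on $[0,\tmax)$ for every fixed large $p$.

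\emph{Step 4 (bootstrap and conclusion).} With $\utau$ bounded in $L^p(\Omega)$ for some $p>n$, smoothing estimates for the Neumann heat semigroup applied to the second equation yield $\|\vtau\cd\|_{W^{1,q}(\Omega)}\le C$ for some $q>n$; feeding this back into a Moser-type iteration on the first equation upgrades the bound to $\lp{\infty}{\utau\cd}\le C$, all uniformly in $t<\tmax$. By the extensibility criterion this excludes $\tmax<\infty$, so the solution is global and bounded. I expect the main obstacle to be the sharp bookkeeping in Step 3: obtaining the precise threshold \eqref{mainass2} rather than merely some unspecified smallness of $\chi_0$, while carefully tracking the $\lambda(\vtau)_t$ term, is exactly what will later permit the uniform-in-$\lambda$ estimates needed to pass to the limit $\lambda\searrow0$.
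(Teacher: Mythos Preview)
Your outline is essentially the paper's approach: local existence with blow-up alternative, the pointwise lower bound $\vtau\ge\eta$, the weighted $L^p$-functional of \cite{Mizukami-Yokota_02}, and a semigroup bootstrap to $L^\infty$. Two small corrections are worth recording. First, the paper does not test with $p\utau^{p-1}$ and then ``couple'' with a separate $\vtau$-functional; it works directly with the single weighted quantity $\int_\Omega \utau^p\,\vphi(\vtau)$ where $\vphi(s)=\exp\bigl\{-r\int_\eta^s(a+\sigma)^{-k}\,d\sigma\bigr\}$, and it is precisely this multiplicative weight that produces, after completing the square, the quadratic in $r$ whose solvability for $p=\tfrac{n}{2}$, $\ep=0$ is the threshold \eqref{mainass2}; phrasing it as two additively coupled identities would not yield the sharp constant. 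Second, the bootstrap only needs $p>\tfrac{n}{2}$ (not $p>n$): Lemma~\ref{lem;estiforvandnablav} then gives $\nabla\vtau\in L^\mu$ for some $\mu>n$, and a Duhamel representation as in Lemma~\ref{lem;estimateforufromp} (rather than a Moser iteration) lifts this to $\utau\in L^\infty$.
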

%
%

The next corollary gives existence of global solutions 
satisfying a uniform-in-$\lambda$ estimate under a condition independent of $\lambda$. 

\begin{corollary}\label{mainth2}
Let $\Omega$ be a bounded domain in $\Rn$ 
$(n\ge 2)$ 
with smooth boundary, 
and assume that $\chi$ satisfies \eqref{mainass} 
with some $\vtheta\in (0,1)$, $a\ge0$, $k>1$ and $\chi_0>0$ satisfying  
\begin{align}\label{mainass3}
  \chi_0 < \frac{2k(a+\eta)^{k-1}}{n}. 
\end{align}
Then for all $\uinit,\vinit$ satisfying \eqref{ini}, 
there exists $\lambda_0\in (0,1)$ such that 
for all $\lambda\in (0,\lambda_0)$ 
the problem \eqref{cp} possesses a unique global classical solution 
$(\utau,\vtau)$ satisfying 
that there exists $C>0$ independent of $\lambda\in (0,\lambda_0)$ such that 
\begin{align}\label{estimate;uniformintau}
  \lp{\infty}{\utau\cd}+\|\vtau\cd\|_{W^{1,q}(\Omega)}\le C
\end{align}
for all $t>0$ and any $\lambda \in (0,\lambda_0)$. 
\end{corollary}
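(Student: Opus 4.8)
The plan is to deduce Corollary~\ref{mainth2} from Theorem~\ref{mainth} by verifying that the $\lambda$-independent smallness condition \eqref{mainass3} forces the $\lambda$-dependent condition \eqref{mainass2} to hold for all sufficiently small $\lambda$, and then by extracting uniformity of the constant from the structure of the bound in Theorem~\ref{mainth}. To this end I would introduce the threshold function
\begin{align*}
  F(\lambda):=\frac{4k(a+\eta)^{k-1}}
  {(1-\lambda)_{+}n + \sqrt{n(n\lambda^2-2n\lambda +n + 8\lambda)}}
  \qquad (\lambda\in(0,1)),
\end{align*}
so that \eqref{mainass2} reads $\chi_0<F(\lambda)$. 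For $n\ge 2$ the radicand is positive on $(0,1)$, so $F$ is continuous there.

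First I would compute the one-sided limit of $F$ at the origin. As $\lambda\searrow 0$ one has $(1-\lambda)_{+}n\to n$ and $n\lambda^2-2n\lambda+n+8\lambda\to n$, whence $\sqrt{n(n\lambda^2-2n\lambda+n+8\lambda)}\to\sqrt{n^2}=n$. Consequently the denominator of $F(\lambda)$ converges to $2n$ and
\begin{align*}
  \lim_{\lambda\searrow 0}F(\lambda)
  =\frac{4k(a+\eta)^{k-1}}{2n}
  =\frac{2k(a+\eta)^{k-1}}{n},
\end{align*}
which is exactly the right-hand side of \eqref{mainass3}. Since $F$ extends continuously to $\lambda=0$ by this value and the hypothesis \eqref{mainass3} is the strict inequality $\chi_0<\frac{2k(a+\eta)^{k-1}}{n}$, there exists $\lambda_0\in(0,1)$ such that $\chi_0<F(\lambda)$ for all $\lambda\in(0,\lambda_0)$; in other words \eqref{mainass2} holds for every $\lambda\in(0,\lambda_0)$. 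For each such fixed $\lambda$ the hypotheses of Theorem~\ref{mainth} are then met, so \eqref{cp} admits a unique global classical solution $(\utau,\vtau)$ obeying $\lp{\infty}{\utau\cd}+\|\vtau\cd\|_{W^{1,q}(\Omega)}\le C$ for all $t>0$, with a constant $C$ that a priori may depend on $\lambda$.

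The genuinely essential point is to upgrade this to a \emph{uniform-in-$\lambda$} bound, i.e.\ to show $C$ can be chosen independently of $\lambda\in(0,\lambda_0)$. After possibly shrinking $\lambda_0$, I would use continuity of $F$ up to $\lambda=0$ together with the strict inequality in \eqref{mainass3} to fix $\lambda_0$ so small that
\begin{align*}
  F(\lambda)\ge \tfrac{1}{2}\Bigl(\chi_0+\tfrac{2k(a+\eta)^{k-1}}{n}\Bigr)>\chi_0
  \qquad (\lambda\in(0,\lambda_0)),
\end{align*}
so that the gap $F(\lambda)-\chi_0$ is bounded below by a positive constant uniform in $\lambda$. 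I would then revisit the a priori estimate underlying Theorem~\ref{mainth} and check that every constant entering it depends on $\lambda$ only through this gap (equivalently, through the margin in the smallness condition) together with quantities — the initial data $\uinit,\vinit$, the domain $\Omega$, and $n,k,a,\chi_0$ — that are manifestly independent of $\lambda$, while the coefficient $\lambda$ in the second equation of \eqref{cp} enters only through terms controllable uniformly for $\lambda\in(0,1)$.

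The hard part will be exactly this last verification. One must trace the $\lambda$-dependence of each constant through the functional/energy scheme proving Theorem~\ref{mainth} and confirm that no estimate degenerates as $\lambda\searrow 0$; the uniform lower bound on $F(\lambda)-\chi_0$ is what prevents such degeneration. Granting this, the estimates assemble into a single constant $C$ valid for all $\lambda\in(0,\lambda_0)$, and the desired bound \eqref{estimate;uniformintau} follows.
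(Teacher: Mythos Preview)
Your overall strategy matches the paper's: observe by continuity that \eqref{mainass3} forces \eqref{mainass2} for all small $\lambda$, then trace the $\lambda$-dependence through the a priori machinery behind Theorem~\ref{mainth}. The limit computation $F(\lambda)\to 2k(a+\eta)^{k-1}/n$ as $\lambda\searrow 0$ is correct and delivers the existence of $\lambda_0$.

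The gap is that you explicitly leave the essential point---the uniform-in-$\lambda$ tracking---unverified (``Granting this\ldots''), and your heuristic that all constants depend on $\lambda$ only through the margin $F(\lambda)-\chi_0$ is not the mechanism that actually controls the estimates. In the paper the uniformity is not obtained by re-inspecting the proof of Theorem~\ref{mainth} after the fact; it is built into each lemma via a ``moreover, independent of $\lambda$'' clause. Concretely, one first fixes $p>\tfrac{n}{2}$ and $\ep\in(0,\tfrac{1}{2})$ \emph{independently of $\lambda$} so that the refined condition \eqref{ass;P-P} holds for all $\lambda\in(0,\lambda_0)$ (this is Corollary~\ref{lem;P-ELp}, again by continuity). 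The delicate issue is then that the weight parameter $r=r(\lambda)$ in \eqref{def;r} carries an explicit factor of $\lambda$, while the differential inequality \eqref{ineq;difuphi} contains the coefficient $r/\lambda$; Lemma~\ref{lem;ftau} is exactly what provides a uniform positive lower bound on the quadratic $p\lambda^2-2p\lambda+p+4\ep p\lambda+4\lambda-4\ep\lambda$ for $\lambda\in[0,\lambda_0]$, so that both $r$ and $r/\lambda$ remain bounded and the constants in Lemma~\ref{lem;difineq2} (and hence Lemma~\ref{lem;estimateforu}) can be chosen uniformly. Combined with the already $\lambda$-uniform Lemmas~\ref{lem;estiforvandnablav} and~\ref{lem;estimateforufromp}, this yields \eqref{estimate;uniformintau}. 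Your outline would become a proof once this step is carried out; without it, the ``hard part'' you identify is precisely the part that is missing.
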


Then the uniform-in-$\lambda$ estimate for the solution 
obtained in Corollary \ref{mainth2} 
leads to the following result. 

\begin{thm}\label{mainth3}
Let $\Omega$ be a bounded domain in $\Rn$ 
$(n\ge 2)$ 
with smooth boundary, 
and assume that $\chi$ satisfies \eqref{mainass} 
and \eqref{mainass3}. 
Then for all $\uinit,\vinit$ satisfying \eqref{ini}, 
there exist unique functions 
\[
  u\in C(\obar\times [0,\infty))\cap 
  C^{2,1}(\obar\times (0,\infty)) 
\ \mbox{and} \ 
  v \in C^{2,0}(\obar\times (0,\infty))\cap 
  L^\infty([0,\infty);W^{1,q}(\Omega)) 
\]
such that the solution $(\utau,\vtau)$ of \eqref{cp} satisfies 
\begin{align*}
 &\utau \to u \quad 
 \mbox{in}\ C_{\rm loc}(\obar\times [0,\infty)),
\\
 &\vtau \to v \quad \mbox{in}\ 
 C_{\rm loc}(\obar\times (0,\infty)) \cap L^2_{\rm loc}((0,\infty);W^{1,2}(\Omega)) 
\end{align*}
as $\lambda\searrow 0$. 
Moreover, the pair of the functions $(u,v)$ solves \eqref{cpp-e} classically. 
\end{thm}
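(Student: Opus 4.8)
The plan is to establish the convergence by combining the uniform-in-$\lambda$ estimate from Corollary \ref{mainth2} with compactness arguments and parabolic regularity theory, and then to verify that the limit functions solve the parabolic-elliptic system \eqref{cpp-e}. First I would fix a null sequence $\lambda_n \searrow 0$ and use the uniform bound \eqref{estimate;uniformintau} to bootstrap. Since $\lp{\infty}{\utau\cd}$ is bounded uniformly in $\lambda$ and $t$, standard parabolic Schauder or $L^p$-$L^q$ smoothing estimates applied to the first equation in \eqref{cp} (viewed as a linear parabolic equation with the drift term $\nabla\cdot(\utau\chi(\vtau)\nabla\vtau)$ treated as a forcing) should yield uniform bounds on $\utau$ in H\"older spaces $C^{\vtheta,\vtheta/2}$ on compact subsets of $\obar\times[0,\infty)$, together with interior $C^{2+\vtheta,1+\vtheta/2}$ bounds away from $t=0$. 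In parallel, the uniform $W^{1,q}$-bound on $\vtau$ with $q>n$ gives a uniform H\"older bound on $\vtau$, and elliptic/parabolic regularity for the second equation $\lambda(\vtau)_t = \Delta\vtau - \vtau + \utau$ then upgrades this to the higher regularity needed to pass to the limit in the nonlinear chemotactic term.

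Next I would invoke the Arzel\`a--Ascoli theorem to extract a subsequence $\lambda_{n_j}$ along which $\utaunj \to u$ in $C_{\rm loc}(\obar\times[0,\infty))$ and $\vtaunj \to v$ in $C_{\rm loc}(\obar\times(0,\infty))$, with convergence of the relevant spatial derivatives of $\vtaunj$ strong enough in $L^2_{\rm loc}((0,\infty);W^{1,2}(\Omega))$ to handle the product $\utau\chi(\vtau)\nabla\vtau$. The key point for the second component is that the term $\lambda(\vtau)_t$ must vanish in the limit: since $\vtau$ and its time derivative are controlled (at least in a weak/integrated sense) uniformly, multiplying $\lambda(\vtau)_t$ by $\lambda$ forces it to $0$ as $\lambda\searrow 0$, so that $v$ satisfies $0 = \Delta v - v + u$ in the limit. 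I would verify that the limit pair $(u,v)$ satisfies the weak formulation of \eqref{cpp-e}, using the strong convergence of $\nabla\vtaunj$ in $L^2_{\rm loc}$ to pass to the limit inside the integral $\int_\Omega \utau\chi(\vtau)\nabla\vtau\cdot\nabla\varphi$, and then bootstrap the weak solution to a classical one via elliptic regularity for $v$ (given the H\"older-continuous right-hand side $u$) and parabolic regularity for $u$.

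For uniqueness of the limit, I would argue that the parabolic-elliptic system \eqref{cpp-e} admits at most one classical solution with the stated regularity, so that the full family $(\utau,\vtau)$ — not merely a subsequence — converges as $\lambda\searrow 0$; this is the standard subsequence-uniqueness argument. Uniqueness for \eqref{cpp-e} can be established by an energy/Gronwall estimate on the difference of two solutions, exploiting the boundedness of $\chi$ and its derivative together with the $W^{1,q}$-control on $v$ to absorb the nonlinear terms.

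The main obstacle I expect is the passage to the limit in the nonlinear cross-diffusion term $\nabla\cdot(\utau\chi(\vtau)\nabla\vtau)$, because it requires strong (not merely weak) convergence of $\nabla\vtau$. Weak convergence alone does not allow one to pass a product of two weakly converging factors to the limit. Securing the $L^2_{\rm loc}((0,\infty);W^{1,2}(\Omega))$ strong convergence of $\vtau$ is therefore delicate: I anticipate it comes from testing the $v$-equation against $\vtau - v$ (or a difference of the $v$-equations across two values of $\lambda$), integrating by parts, and using the uniform estimates to show the gradients form a Cauchy sequence in $L^2_{\rm loc}$, with the troublesome $\lambda(\vtau)_t$ term controlled precisely by the smallness of $\lambda$. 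Handling the singular initial-layer behavior of $\vtau$ near $t=0$ (where the parabolic-parabolic initial datum $\vinit$ need not match the elliptic constraint $0=\Delta v - v + u$) is the reason the convergence for $v$ is only claimed on $(0,\infty)$ rather than $[0,\infty)$, and keeping the estimates uniform up to but not including $t=0$ will require care.
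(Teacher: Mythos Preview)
Your proposal is correct and follows essentially the same route as the paper: uniform-in-$\lambda$ estimates from Corollary~\ref{mainth2} feed into parabolic H\"older regularity (the paper cites \cite{Porzio-Vespri}) for $\{\utau\}$, Arzel\`a--Ascoli yields subsequential limits, the passage to the limit in the nonlinear term and the vanishing of $\lambda(\vtau)_t$ are handled as in \cite{Wang-Winkler-Xiang}, and uniqueness for \eqref{cpp-e} (the paper invokes \cite{stinner_tello_winkler}) upgrades subsequential to full convergence. Your write-up is in fact more explicit than the paper's, which compresses Lemmas~\ref{lem;boundedinholder}--\ref{lem;convergence;final} into brief sketches pointing to those external references.
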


Difficulties are caused by the facts that 
$\vtau$ satisfies a parabolic equation and 
$v$ satisfies an elliptic equation.  
Thus we cannot use methods only for parabolic equations and 
only for elliptic equations when we would like to obtain some 
error estimate for solutions of \eqref{cp} and those of \eqref{cpp-e}, 
and it seems to be difficult to combine these methods. 
Therefore we rely on a compactness method to obtain 
convergence of a solution $(\utau,\vtau)$ as $\lambda\searrow 0$. 
In order to use a compactness method 
some uniform-in-$\lambda$ estimate for the solution is required. 
The strategy of seeing an estimate independent of $\lambda$  
is to modify the methods in 
\cite{Mizukami-Yokota_02}. 
One of keys for this strategy is to derive the differential inequality 
\[
  \frac d{dt} \into \utau^p \varphi (\vtau) 
  \le -c_1 \left( \into \utau^p \varphi (\vtau)\right)^b 
  + c_2 \into \utau^p \varphi (\vtau) 
  +c_3, 
\]
where $\varphi$ is some function, and $b>1$ and $c_1,c_2,c_3>0$ are some constants. 
This together with a smoothing property of $(e^{\tau\Delta})_{\tau\ge 0}$ 
enables us to establish the desired estimate. 
Then we can see convergence of the solutions $(\utau,\vtau)$ as $\lambda\searrow 0$ 
by using the uniform-in-$\lambda$ estimate and the Arzel\`{a}--Ascoli theorem. 

%
%

This paper is organized as follows. 
In Section 2 we collect basic facts 
which will be used later. 
In Section 3 
we prove global existence and 
boundedness in \eqref{cp} for all $\lambda>0$, 
and establish a uniform-in-$\lambda$ estimate 
(Theorem \ref{mainth} and Corollary \ref{mainth2}).  
Section 4 is devoted to the proof of Theorem \ref{mainth3} 
according to arguments in \cite{Wang-Winkler-Xiang}; 
we show convergence of the solution $(\utau,\vtau)$ for \eqref{cp} 
as $\lambda\searrow 0$ 
by using the uniform-in-$\lambda$ estimate 
established in Corollary \ref{mainth2}. 

%
%

\section{Preliminaries}

In this section we collect results which will be used later. 
We first introduce the uniform-in-time 
lower estimate for $\vtau$ which is independent of 
$\lambda>0$. 

%
%
%
%
\begin{lem}\label{loweresti}
Let $\lambda>0$ and let $u\in C(\overline{\Omega}\times [0,T))$ be 
a nonnegative function such that, with some $m>0$, 
$\int_\Omega u(\cdot,t)
=m$ for every $t\in[0,T)$. 
If  $\vinit\in C(\overline{\Omega})$ is 
a nonnegative function 
in $\overline{\Omega}$ and 
$\vtau\in C^{2,1}(\overline{\Omega}\times (0,T)) 
\cap C(\overline{\Omega}\times [0,T))$ 
is a classical solution of 
\begin{align*}
\begin{cases}
  \lambda (\vtau)_t=\Delta \vtau - \vtau + u, 
&
  x\in\Omega,\ t\in (0,T),
\\
  \nabla \vtau\cdot \nu =0, 
&
  x\in\partial\Omega,\ t\in(0,T), 
\\
  \vtau(x,0)=\vinit (x), 
&
  x\in \Omega, 
\end{cases}
\end{align*}
then  
\begin{align*}
\inf_{x\in \Omega}\vtau(x,t)\geq \widetilde{\eta} 
\end{align*}
holds for all $t\in(0,T)$, where 
\begin{align}\label{defetatilde}
  \widetilde{\eta}:=
  \sup_{\tau>0}\left(
  \min\left\{
  e^{-2\tau} \min_{x\in\overline{\Omega}}\vinit(x),\ 
  c_0m(1-e^{-\tau})
  \right\}
  \right).
\end{align} 
\end{lem}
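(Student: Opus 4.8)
The plan is to eliminate the $\lambda$-dependence by a time rescaling and then to read off the lower bound from the variation-of-constants formula, separating the contribution of the initial datum $\vinit$ from that of the mass-$m$ source $u$. First I would set $s:=t/\lambda$ and $w(x,s):=\vtau(x,\lambda s)$, so that $w$ solves $w_s=\Delta w-w+u(\cdot,\lambda s)$ under the Neumann boundary condition, with $w(\cdot,0)=\vinit$ and $\into u(\cdot,\lambda s)=m$ for every $s$. Since the target quantity $\widetilde{\eta}$ does not involve $\lambda$, it then suffices to prove $\inf_{x\in\Omega}w(x,s)\ge\widetilde{\eta}$ for all $s>0$ and to undo the scaling. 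Writing $(e^{r(\Delta-1)})_{r\ge0}=(e^{-r}e^{r\Delta})_{r\ge0}$ for the Neumann semigroup generated by $\Delta-1$, the representation
\[
  w(\cdot,s)=e^{s(\Delta-1)}\vinit+\int_0^s e^{(s-\sigma)(\Delta-1)}u(\cdot,\lambda\sigma)\,d\sigma
\]
displays $w$ as a sum of two nonnegative terms, by positivity of the kernel and nonnegativity of $\vinit$ and $u$; I would keep either term according to which regime I am in.

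For the first term I would invoke the comparison principle: since the Neumann heat semigroup $e^{r\Delta}$ is order preserving and fixes constants, one gets $e^{s(\Delta-1)}\vinit\ge e^{-s}\min_{\obar}\vinit$, the fading memory of the initial datum, which is what accounts for the slot involving $\min_{\obar}\vinit$ weighted by a decaying exponential. For the second term the pointwise values of $u$ are useless (they may vanish), so I would instead exploit its \emph{mass}: integrating the equation gives the identity $\into w(\cdot,s)=m(1-e^{-s})+e^{-s}\into\vinit\ge m(1-e^{-s})$, and combining this with the lower bound $c_0$ for the fundamental solution over a look-back window of length $\tau$ converts the mass $m$ carried by $u$ into a pointwise bound of the shape $c_0m(1-e^{-\tau})$, uniform in $x$ and independent of $\lambda$, the temporal integral of the exponential weight producing the factor $1-e^{-\tau}$. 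Taking the smaller of the two contributions --- the initial datum dominates for short (rescaled) times, the source for long times --- and then optimising over the window length $\tau$ yields exactly $\sup_{\tau>0}\min\{\,\cdot\,,\,\cdot\,\}=\widetilde{\eta}$, and the $\sup_\tau\min\{\cdot,\cdot\}$ structure is precisely what is forced by demanding the \emph{same} bound at every time $s>0$ rather than at a single time.

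The main obstacle is this last pointwise-from-mass step. The Neumann heat kernel carries no positive lower bound as the elapsed time tends to $0$, because it concentrates on the diagonal, so one cannot bound the smoothing of $u$ below by $c_0m$ on the whole window; the window must be kept away from this short-time singularity while still capturing the full mass $m$ and producing a clean exponential weight, and the constant $c_0$ must be chosen uniformly in $x$ and $y$ so that the resulting estimate is genuinely independent of $\lambda$ and of $s$ (hence of every $t\in(0,T)$). Checking that one fixed $\widetilde{\eta}$ controls $w$ simultaneously across all $s>0$, reconciling the short-time and long-time regimes at their crossover, is the delicate part of the argument.
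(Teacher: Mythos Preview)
Your approach is correct and coincides with the paper's: both rescale time by $w(\cdot,s)=\vtau(\cdot,\lambda s)$ to reduce to the $\lambda$-free equation $w_s=\Delta w-w+u(\cdot,\lambda s)$ with conserved mass $m$. At that point the paper simply cites Fujie's lower-bound lemma for this normalised problem, whereas you sketch the variation-of-constants argument that underlies that citation.
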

\begin{proof}
For a function $f:\Omega \times [0,T)\to \mathbb{R}$ 
putting $\widetilde{f}(x,t):=f(x,\lambda t)$ for $(x,t)\in \Omega \times (0,\frac{T}{\lambda})$, 
we see that $\widetilde{\vtau}$ satisfies 
\begin{align*}
\begin{cases}
  (\widetilde{\vtau})_t=\Delta \widetilde{\vtau} - \widetilde{\vtau} + \widetilde{u}, 
&
  x\in\Omega,\ t\in (0,\frac{T}{\lambda}),
\\
  \nabla \widetilde{\vtau}\cdot \nu =0, 
&
  x\in\partial\Omega,\ t\in(0,\frac{T}{\lambda}), 
\\
  \widetilde{\vtau}(x,0)=\vinit(x),
&
  x\in \Omega. 
\end{cases}
\end{align*}
Thus since the mass conservation yields that $\into u(\cdot,\lambda t)=m$ 
for all $t\in (0,\frac T\lambda)$ and any $\lambda>0$, 
we infer from \cite{Fujie_2015, Fujie_DocTh} 
(see also \cite[Lemma 2.1]{Mizukami-Yokota_02}) that 
$\widetilde{\eta}$ defined as \eqref{defetatilde} satisfies
\begin{align*}
\inf_{x\in \Omega}\widetilde{\vtau}(x,t)\ge \widetilde{\eta} 
\end{align*} 
for all $t\in (0,\frac{T}{\lambda}$), 
which implies this lemma. 
\end{proof}

We next recall the result which is concerned with local existence of 
solutions (see e.g., \cite[Lemma 2.1]{Winkler_2010ab}). 

%
%
%
%
\begin{lem}\label{lem;localexistence}
Let $\Omega$ be a bounded domain in $\Rn$ $(n\ge 2)$ 
with smooth boundary, 
and assume that \eqref{mainass} and \eqref{ini} are 
satisfied. 
Then for all $\lambda>0$ 
there exists $\tmax\in (0,\infty]$ such that 
the problem \eqref{cp} possesses a unique solution 
$(\utau,\vtau)$ fulfilling 
\begin{align*}
  &\utau,\vtau\in C(\overline{\Omega}\times [0,\tmax))
  \cap C^{2,1}(\overline{\Omega}\times (0,\tmax)),
\\
  &\utau (x,t)\ge 0\quad \mbox{for all}\ x\in \Omega\ \mbox{and all} \ t>0,
\\
  &\into \utau\cd = \into \uinit
\quad \mbox{and}\quad
  \inf_{x\in \Omega}\vtau (x,t) \ge \eta
  \quad \mbox{for all}\ t\in (0,\tmax) \ \mbox{and}\ \lambda>0.
\end{align*}
Moreover, either $\tmax=\infty$ or 
\begin{align*}
  \limsup_{t\to\tmax}(\lp{\infty}{\utau\cd}
  + \|\vtau\cd\|_{W^{1,q}(\Omega)})=\infty. 
\end{align*}
\end{lem}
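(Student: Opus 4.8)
The plan is to follow the by-now-standard fixed-point scheme for chemotaxis systems (as in \cite{Winkler_2010ab}), adapted to the present $\lambda$-dependent second equation. The essential observation is that the constant $\lambda$ only rescales the Neumann heat semigroup governing $\vtau$ and therefore affects constants but not the structure of the argument.

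First I would reformulate \eqref{cp} as a system of integral (mild) equations via the variation-of-constants formula. Writing $(e^{t\Delta})_{t\ge0}$ for the Neumann heat semigroup on $\Omega$, the first equation reads
\[
  \utau(\cdot,t)=e^{t\Delta}\uinit
  -\int_0^t e^{(t-s)\Delta}
  \nabla\cdot\bigl(\utau\chi(\vtau)\nabla\vtau\bigr)(\cdot,s)\,ds,
\]
while the second, after dividing by $\lambda$, is solved by
\[
  \vtau(\cdot,t)=e^{\frac t\lambda(\Delta-1)}\vinit
  +\frac1\lambda\int_0^t e^{\frac{t-s}\lambda(\Delta-1)}\utau(\cdot,s)\,ds.
\]
On a closed ball in $C(\obar\times[0,T])$, with the $\vtau$-component measured in the $W^{1,q}$-norm ($q>n$), I would define the map that sends $\utau$ first to the solution $\vtau$ of the linear second equation and then to the solution of the first equation with that $\vtau$. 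Using the standard $L^p$--$L^q$ smoothing estimates for $(e^{t\Delta})_{t\ge0}$ together with the gradient estimate $\|\nabla e^{t\Delta}w\|_{L^q}\le C\,t^{-1/2}\|w\|_{L^q}$, one checks that for $T=T(\lambda)$ small this map leaves the ball invariant and is a contraction; the Banach fixed-point theorem then yields a unique mild solution on $[0,T)$, and parabolic Schauder theory upgrades it to a classical solution in $\obar\times(0,\tmax)$, where $\tmax$ denotes the maximal existence time.

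The remaining assertions follow from standard arguments. Nonnegativity of $\utau$ and positivity of $\vtau$ come from the maximum principle applied to the two parabolic equations (each linear once the other unknown is frozen); note that the requirement $\vinit>0$ in \eqref{ini} when $a=0$ is precisely what keeps $\chi(\vtau)=\chi_0/\vtau^k$ well-defined. Mass conservation is obtained by integrating the first equation over $\Omega$ and using $\nabla\utau\cdot\nu=\nabla\vtau\cdot\nu=0$, so that $\frac{d}{dt}\into\utau\cd=0$ and hence $\into\utau\cd=\into\uinit=:m$ for all $t$. With this identity in hand the lower bound $\inf_{x\in\Omega}\vtau(x,t)\ge\eta$ is immediate from Lemma \ref{loweresti}, since $\widetilde\eta$ in \eqref{defetatilde} coincides with $\eta$ once $m=\into\uinit$. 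Finally, the extensibility alternative is the usual continuation argument: if $\tmax<\infty$ while $\lp{\infty}{\utau\cd}+\|\vtau\cd\|_{W^{1,q}(\Omega)}$ remained bounded up to $\tmax$, one could restart the fixed-point construction from a time close to $\tmax$ and extend the solution, contradicting maximality.

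The main obstacle is the chemotactic drift term $\nabla\cdot(\utau\chi(\vtau)\nabla\vtau)$: to close the contraction one must control $\nabla\vtau$ in a norm strong enough that the product $\utau\chi(\vtau)\nabla\vtau$ is meaningful and depends Lipschitz-continuously on the data. This is exactly why the solution space tracks $\vtau$ in $W^{1,q}$ with $q>n$, so that $W^{1,q}(\Omega)\hookrightarrow C^0(\obar)$ and $\nabla\vtau$ is controlled pointwise, and why the smoothing estimate carrying the integrable singularity $t^{-1/2}$ is indispensable. The $\lambda$-dependence enters only through the constant in the semigroup $e^{\frac t\lambda(\Delta-1)}$ and thus shrinks the admissible existence time $T(\lambda)$ without affecting the qualitative conclusion.
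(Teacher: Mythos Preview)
Your proposal is correct and follows exactly the approach the paper indicates: the paper does not actually give a proof of this lemma but simply cites \cite[Lemma 2.1]{Winkler_2010ab} for the local existence, uniqueness, regularity and blow-up alternative, and relies on Lemma~\ref{loweresti} (with $m=\into\uinit$) for the lower bound $\inf_{x\in\Omega}\vtau(x,t)\ge\eta$. Your sketch spells out precisely this standard fixed-point argument and the auxiliary steps (maximum principle for nonnegativity, integration of the first equation for mass conservation, continuation for the extensibility criterion), so there is nothing to correct or compare.
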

%

%
%
%
%
Then we shall provide the following two lemmas which hold keys to derive 
important estimates for the proofs of main results. 
\begin{lem}\label{lem;estiforvandnablav}
Let $1\le \theta,\mu\le \infty$. 
Then we have the following properties. 
\begin{itemize}
\item[{\rm (i)}]
If $\frac{n}{2}(\frac{1}{\theta}-\frac{1}{\mu})<1$, 
then there exists $C=C(\theta,\mu)>0$ such that 
\begin{align}\label{esti;v;fromu}
  \lp{\mu}{\vtau\cd}\le 
  C \left(
  1+ \sup_{s\in (0,\tmax)}\lp{\theta}{\utau(\cdot,s)}
  \right)
\end{align}
for all $t\in (0,\tmax)$ and any $\lambda>0$.
\item[{\rm (ii)}]
If $\frac{1}{2}+
\frac{n}{2}(\frac{1}{\theta}-\frac{1}{\mu})<1$, 
then there exists $C=C(\theta,\mu)>0$ such that 
\begin{align}\label{esti;nablav;fromu}
  \lp{\mu}{\nabla\vtau\cd}\le 
  C \left(
  1+ \sup_{s\in (0,\tmax)}\lp{\theta}{\utau(\cdot,s)}
  \right)
\end{align}
for all $t\in (0,\tmax)$ and any $\lambda>0$.
\end{itemize}
\end{lem}
\begin{proof}
In the case that $\frac{n}{2}(\frac{1}{\theta}-\frac{1}{\mu})<1$ 
by using the transformation 
$\widetilde\vtau(x,t):=\vtau (x,\lambda t)$ 
for $(x,t)\in \Omega \times (0,\frac{\tmax}{\lambda})$ 
and a straightforward 
application of well-known smoothing estimates for the heat semigroup under  homogeneous  Neumann boundary conditions (see \cite[Lemma 2.4 (i)]{Winkler_2011})  we have that 
\begin{align*}
\lp{\mu}{\widetilde{\vtau}\cd}\le C_1\left(
1+\sup_{s\in \big(0,\frac{\tmax}{\lambda}\big)}\lp{\theta}{\utau(\cdot,\lambda s)}
\right)
\end{align*}
for all $t\in (0,\frac{\tmax}{\lambda})$ and any $\lambda>0$ 
with some $C_1=C_1(\theta,\mu)>0$, 
which implies that \eqref{esti;v;fromu} holds. 
Similarly, in the case that $\frac{1}{2}+\frac{n}{2}(\frac{1}{\theta}-\frac{1}{\mu})<1$
the same transformation and \cite[Lemma 2.4 (ii)]{Winkler_2011} 
derive \eqref{esti;nablav;fromu} 
with some $C_2=C_2(\theta,\mu)>0$ independent of $\lambda>0$. 
\end{proof}

%
%
%
%
\begin{lem}\label{lem;estimateforufromp}
Let $\lambda>0$. 
If there exist $p>\frac{n}{2}$ and $M>0$ 
such that  
\begin{align*}
\lp{p}{\utau\cd} \le M
\quad \mbox{for all}\ t\in (0,\tmax), 
\end{align*}
then there exists $C=C(p,M)>0$ such that 
\begin{align*}
\lp{\infty}{\utau\cd}\le C
\quad \mbox{for all} \ t\in (0,\tmax). 
\end{align*}
Moreover, if $p$ and $M$ are independent of $\lambda\in (0,\lambda_0)$ 
with some $\lambdaz \in (0,1)$, 
then $C$ is also independent of $\lambda\in (0,\lambda_0)$. 
\end{lem}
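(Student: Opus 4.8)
The plan is to run a semigroup bootstrap for $\utau$ based on the variation-of-constants representation
\[
  \utau\cd = e^{t\Delta}\uinit
  - \int_0^t e^{(t-s)\Delta}\nabla\cdot\big(\utau\chi(\vtau)\nabla\vtau\big)(\cdot,s)\,ds,
\]
where $(e^{t\Delta})_{t\ge 0}$ denotes the Neumann heat semigroup; crucially this semigroup carries no factor of $\lambda$, since the first equation in \eqref{cp} contains none. The starting point is that the sensitivity factor is bounded uniformly in $\lambda$: by Lemma \ref{lem;localexistence} we have $\vtau\ge\eta$ throughout $\Omega\times(0,\tmax)$, so \eqref{mainass} yields $0\le\chi(\vtau)\le\chi_0/(a+\eta)^k=:K$ for every $\lambda>0$. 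Moreover $\uinit\in C(\obar)$ gives $\|e^{t\Delta}\uinit\|_{L^\infty(\Omega)}\le\lp{\infty}{\uinit}$, again independently of $\lambda$.

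The iteration itself proceeds as follows. Suppose that for some exponent $q\ge p$ we already have a uniform-in-time bound $\lp{q}{\utau\cd}\le M_q$. First I would combine the pointwise bound $\chi(\vtau)\le K$ with Lemma \ref{lem;estiforvandnablav}(ii), which controls $\lp{\mu}{\nabla\vtau\cd}$ for every $\mu$ with $\frac1\mu>\frac1q-\frac1n$, and then Hölder's inequality, to bound the drift $\utau\chi(\vtau)\nabla\vtau$ in $L^r(\Omega)$ uniformly in time for every $r$ with $\frac1r>\frac2q-\frac1n$. Inserting this into the Duhamel formula and invoking the standard smoothing estimate $\|e^{\sigma\Delta}\nabla\cdot \mathbf f\|_{L^{q'}(\Omega)}\le C\big(1+\sigma^{-\frac12-\frac n2(\frac1r-\frac1{q'})}\big)e^{-\gamma\sigma}\|\mathbf f\|_{L^r(\Omega)}$ (with $\gamma>0$ the first nonzero Neumann eigenvalue), whose singular part is integrable near $\sigma=0$ exactly when $\frac1{q'}>\frac1r-\frac1n$ while $e^{-\gamma\sigma}$ handles large $\sigma$, produces a new uniform-in-time bound $\lp{q'}{\utau\cd}\le M_{q'}$; the exponential decay is what makes the time integral bounded uniformly in $t$, so each step is a genuine improvement requiring no Gronwall argument. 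Writing $b_j=1/q_j$ for the resulting chain of exponents, this says $b_{j+1}$ may be chosen arbitrarily close to $2b_j-\frac2n$ from above; since the affine map $b\mapsto 2b-\frac2n$ has fixed point $\frac2n$ and the hypothesis $p>\frac n2$ means $b_0=\frac1p<\frac2n$, we get $b_j-\frac2n=2^j(b_0-\frac2n)\to-\infty$. Hence after finitely many steps $q_j>n$, and one final application (now with $\mu=\infty$ and $r=q_j$) gives the desired $\lp{\infty}{\utau\cd}\le C$.

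For the uniformity claim I would simply track constants: the semigroup constant, $K=\chi_0/(a+\eta)^k$, $\lp{\infty}{\uinit}$, the constants of Lemma \ref{lem;estiforvandnablav}, and the convergent integrals $\int_0^\infty(1+\sigma^{-\alpha})e^{-\gamma\sigma}\,d\sigma$ ($\alpha<1$) are all independent of $\lambda$, while the finite chain $q_0<q_1<\cdots$ of exponents depends only on $n$ and $p$; thus if $p,M$ are independent of $\itau$ for some $\lambdaz\in(0,1)$, the final $C$ is too. The step I expect to require the most care is the exponent bookkeeping: at each stage one must verify the strict inequalities $\frac1\mu>\frac1q-\frac1n$, $\frac1r>\frac2q-\frac1n$ and $\frac1{q'}>\frac1r-\frac1n$, so that Lemma \ref{lem;estiforvandnablav}(ii) applies and the singular kernel stays integrable near $\sigma=0$, and to confirm that the threshold $p>\frac n2$ is precisely what drives $b_j$ below $\frac1n$—and ultimately below $0$—in finitely many iterations, thereby closing the bootstrap.
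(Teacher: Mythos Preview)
Your argument is correct, but it follows a different path from the paper's. The paper does not iterate: it chooses once and for all exponents $n<r<\mu<\frac{np}{(n-p)_+}$, uses Lemma~\ref{lem;estiforvandnablav}(ii) to bound $\lp{\mu}{\nabla\vtau}$ directly from the $L^p$ bound on $\utau$, and then writes the Duhamel formula starting from $t_0:=(t-1)_+$ rather than from $0$. This restart keeps the time integral over an interval of length $\le 1$, so no exponential decay of the semigroup is needed; instead, setting $A(T'):=\sup_{(0,T')}\lp{\infty}{\utau}$ and interpolating $\lp{r\mu/(\mu-r)}{\utau}\le A(T')^{\theta}\lp{1}{\uinit}^{1-\theta}$ produces a sublinear inequality $A(T')\le C_5+C_4 A(T')^{\theta}$ with $\theta<1$, which self-closes in one stroke. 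Your route trades this algebraic trick for a finite bootstrap chain, using the estimate $\|e^{\sigma\Delta}\nabla\cdot\mathbf f\|_{L^{q'}}\le C(1+\sigma^{-\alpha})e^{-\gamma\sigma}\|\mathbf f\|_{L^r}$ to make $\int_0^\infty$ finite at each step. Both approaches yield constants independent of $\lambda$ for the same reasons (the first equation in \eqref{cp}, Lemma~\ref{lem;estiforvandnablav}, and the lower bound $\vtau\ge\eta$ carry no $\lambda$); the paper's version is shorter and avoids the exponent bookkeeping you flag as delicate, while yours is more modular and does not require the a~priori finiteness of $A(T')$ that the self-map argument implicitly uses.
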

\begin{proof}
Thanks to assumption, there exist $p>\frac{n}{2}$ and $C_1>0$ such that 
\begin{align}\label{ineq;Lpforu;proof}
  \lp{p}{\utau\cd}\le C_1
  \quad \mbox{for all}\ t\in(0,\tmax). 
\end{align}
Then we can find $r=r(p)\ge 1$ and $\mu=\mu(p)\ge 1$ such that 
\begin{align*}
  n < r < \mu < \frac{np}{(n-p)_+}, 
\end{align*} 
because $p>\frac n2$. 
Therefore Lemma \ref{lem;estiforvandnablav} and \eqref{ineq;Lpforu;proof} 
enable us to obtain that 
\begin{align*}
 \lp{\mu}{\nabla \vtau\cd} \le C_2 
 \quad\mbox{for all}\ t\in (0,\tmax)
\end{align*}
with some $C_2=C_2(p,C_1)>0$. 
Now we put  
\[ 
  A(T')=\sup_{t\in (0,T')}\lp{\infty}{\utau\cd}  <\infty
\] 
for $T'\in (0,\tmax)$ and will show that $A(T')\le C$ for all $T'\in (0,\tmax)$ with some $C>0$. 
In order to obtain the estimate for $A(T')$ 
we set $t_0:=(t-1)_+$ for $t\in (0,T')$ and 
represent $\utau$ according to 
\begin{align}\label{semi;utau}
  \utau(\cdot,t)
  &=e^{(t-t_0)\Delta}\utau(\cdot,t_0) 
  - \int_{t_0}^t e^{(t-s)\Delta}\nabla\cdot 
  (\utau(\cdot,s)\chi(\vtau(\cdot,s))\nabla \vtau(\cdot,s))\,ds 
  \\\notag
  &=:I_1(\cdot,t)+I_2(\cdot,t).
\end{align}
In the case that $t\le 1$ 
the order preserving property of 
the Neumann heat semigroup implies that 
\begin{align}\label{estiI1-1}
  \lp{\infty}{I_1(\cdot,t)}
  \le \lp{\infty}{\uinit}
  \quad 
  \mbox{for all}\ t\in(0,T'). 
\end{align}
In the case that $t>1$ 
by the $L^p$-$L^q$ estimates for 
$(e^{\tau \Delta})_{\tau\ge 0}$ 
(see \cite[Lemma 1.3 (i)]{win_aggregationvs}) 
and \eqref{ineq;Lpforu;proof} we can see that there exists $C_3=C_3(p)>0$ such that 
\begin{align}\label{estiI1-2}
  \lp{\infty}{I_1(\cdot,t)}
  \le C_3\lp{p}{\utau(\cdot,t_0)} 
  \le C_3C_1
  \quad 
  \mbox{for all}\ t\in (0,T'). 
\end{align}
On the other hand, noting that
\[
  \lp{r}{\utau\cd\nabla \vtau\cd}\le 
  \lp{\frac{r\mu}{\mu-r}}{\utau\cd}\lp{\mu}{\nabla \vtau\cd}
  \le  A(T')^{1-\frac{\mu-r}{r\mu}}\lp{1}{\uinit}^{\frac{\mu-r}{r\mu}} C_2
\]
holds for all $t\in(0,T')$, 
we obtain from a known smoothing property of 
$(e^{\tau\Delta})_{\tau\ge 0}$ (see \cite[Lemma 3.3]{FIWY_2016}) 
that 
\begin{align*}
  \lp{\infty}{I_2\cd} 
  &\le 
  \frac{\chi_0}{(a+\eta)^k}\int_{t_0}^t (t-s)^{-\frac{1}{2}-\frac{n}{2r}}\lp{r}{\utau(\cdot,s)\nabla \vtau(\cdot,s)}\,ds
\\
  &\le \frac{\chi_0 A(T')^{1-\frac{\mu-r}{r\mu}}\lp{1}{\uinit}^{\frac{\mu-r}{r\mu}} C_2}{(a+\eta)^k} 
  \int_0^1  \sigma^{-\frac{1}{2}-\frac{n}{2r}}\,d\sigma 
\end{align*} 
for all $t\in(0,T')$. Since $\int_0^1  \sigma^{-\frac{1}{2}-\frac{n}{2r}}\,d\sigma$ is finite 
from $\frac{n}{2r}<\frac{1}{2}$, 
there exists 
$C_4=C_4(p,C_1)>0$ such that 
\begin{align}\label{estiI2}
\lp{\infty}{I_2\cd}\le C_4 A(T')^{1-\frac{\mu-r}{r\mu}} 
\quad \mbox{for all}\ t\in(0,T').
\end{align}
Therefore a combination of \eqref{estiI1-1} and \eqref{estiI1-2}, 
along with \eqref{estiI2} 
derives that 
\begin{align*}
  \lp{\infty}{\utau(\cdot,t)}
  \le \lp{\infty}{I_1(\cdot,t)}+\lp{\infty}{I_2(\cdot,t)}
  \le C_5 + C_4A(T')^{1-\frac{\mu-r}{r\mu}}
\end{align*}
holds for all $t\in (0,T')$ with some $C_5=C_5(p,C_1)>0$,  
which together with $\frac{\mu-r}{r\mu}<1$ 
implies that there exists $C_6=C_6(p,C_1)>0$ such that 
\begin{align*}
 A(T')=\sup_{t\in(0,T')}
 \lp{\infty}{n(\cdot,t)}\le C_6 
 \quad 
 \mbox{for all}\ T'\in (0,\tmax).
\end{align*}
Therefore we can attain the $L^\infty$-estimate for $\utau$. 
%
Moreover, in the case that $p$ and $C_1$ are independent of $\lambda\in(0,\lambda_0)$ 
with some $\lambda_0\in(0,1)$, aided by Lemma \ref{lem;estiforvandnablav}, 
we can see that the constants appearing in this  
proof are independent of $\lambda\in(0,\lambda_0)$. 
\end{proof}

%
%

\section{Global existence}

In this section we will show global existence and 
boundedness in \eqref{cp} (Theorem \ref{mainth}) 
and the uniform-in-$\lambda$ estimate for the solution 
(Corollary \ref{mainth2}). 
Thanks to Lemma \ref{lem;estimateforufromp}, 
our aim is to derive the $L^p$-estimate for $\utau$ with $p>\frac n2$. 
We first prove the following lemma 
which plays an important role in the proofs of the main results. 

%
%
%
%
\begin{lem}\label{lem;ftau}
For all $\ep\in (0,\frac{1}{2})$, $p>1$ 
and $\lambda\ge 0$, 
\begin{align*}
 p\lambda^2 + p - 2p\lambda + 4\ep p\lambda 
 + 4\lambda -4\ep\lambda > 0
\end{align*}
holds. 
\end{lem}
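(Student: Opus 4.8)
The plan is to reorganize the expression so that its positivity becomes transparent despite the presence of the manifestly negative contribution $-2p\lambda$. First I would collect the three terms carrying a factor of $p$, namely $p\lambda^2 - 2p\lambda + p$, and recognize them as the perfect square $p(\lambda-1)^2$. The remaining terms $4\ep p\lambda + 4\lambda - 4\ep\lambda$ all carry a factor of $\lambda$, and I would factor them as $4\lambda(1 + \ep(p-1))$. This rewrites the left-hand side of the claimed inequality as
\[
  p\lambda^2 + p - 2p\lambda + 4\ep p\lambda + 4\lambda - 4\ep\lambda
  = p(\lambda-1)^2 + 4\lambda\bigl(1 + \ep(p-1)\bigr).
\]

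Next I would observe that under the hypotheses $\ep\in(0,\tfrac12)$ and $p>1$ the coefficient $1 + \ep(p-1)$ is strictly positive (in fact larger than $1$), since $\ep(p-1)>0$. Hence for every $\lambda\ge 0$ both summands on the right-hand side are nonnegative: the first because it is $p>0$ times a square, and the second because it is a nonnegative multiple of $\lambda\ge 0$.

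To upgrade this nonnegativity to the required \emph{strict} inequality I would split into two cases according to whether $\lambda$ vanishes. If $\lambda=0$, the second summand disappears and the expression reduces to $p(0-1)^2 = p > 1 > 0$. If $\lambda>0$, then $4\lambda(1+\ep(p-1))>0$ while $p(\lambda-1)^2\ge 0$, so the sum is again strictly positive. In either case the desired inequality holds for all admissible $\ep$, $p$ and $\lambda$.

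There is no genuine obstacle here; the only point requiring any care is that the expression fails to be positive term by term in its original form, because of the negative summand $-2p\lambda$. The resolution is to complete the square in $\lambda$ among the $p$-weighted terms before reading off positivity, after which the passage from ``$\ge 0$'' to ``$>0$'' is dispatched by the harmless case distinction on whether $\lambda$ equals zero.
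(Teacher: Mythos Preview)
Your proof is correct and takes a genuinely different, more elementary route than the paper. The paper views the expression as a quadratic $f_p(\lambda)=p\lambda^2+(-2p+4\ep p+4-4\ep)\lambda+p$ in $\lambda$, computes its discriminant $D_\lambda$, and then proves $D_\lambda<0$ for all $p>1$ via an auxiliary monotonicity argument in $p$ (showing that $g(p):=D_\lambda/4$ satisfies $g'(p)<0$ and $g(1)=0$); positivity of $f_p$ then follows from the positive leading coefficient. You instead complete the square directly, rewriting the expression as $p(\lambda-1)^2+4\lambda\bigl(1+\ep(p-1)\bigr)$, a sum of two nonnegative terms that cannot vanish simultaneously for $\lambda\ge 0$. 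Your argument is shorter and more transparent; the paper's discriminant approach, while more laborious, incidentally yields positivity for all real $\lambda$ rather than only $\lambda\ge 0$, but that extra range is never used.
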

\begin{proof}
Let $\ep\in (0,\frac{1}{2})$. 
We shall see that the discriminant of $f_p(\lambda):=p\lambda^2 + (-2p+4\ep p+4-4\ep)\lambda+p$ is 
negative: 
\begin{align}\label{purpose;discriminant}
  D_\lambda := 4(-(1-(2\ep-1)^2)p^2 -4(1-\ep)(1-2\ep)p + 4(1-\ep)^2)<0 
  \quad \mbox{for all} \ p>1. 
\end{align}
Now we put $g(p):=-(1-(2\ep-1)^2)p^2 -4(1-\ep)(1-2\ep)p + 4(1-\ep)^2$. 
Then since 
\[
g'(p)=-2(1-(2\ep-1)^2)p-4(1-\ep)(1-2\ep)<0 
\]
from $\ep\in(0,\frac 12)$ and $g(1)=0$ hold, we have 
$g(p)<0$ for all $p>1$, which means that 
\eqref{purpose;discriminant} holds. 
Therefore we obtain 
\[
  f_p(\lambda)>0 \quad \mbox{for all}\ \lambda\ge 0 \ \mbox{and all} \ p>1, 
\]
which entails this lemma. 
\end{proof}

In order to establish the $L^p$-estimate for $\utau$ with $p>\frac{n}{2}$ 
we put 
\[
  \vphi (s):= 
  \exp\left\{
    -r\int_\eta^s \frac{1}{(a+\sigma)^k}\,d\sigma
  \right\} 
  \ \mbox{for} \ s\ge \eta  
\]
with some $r>0$ and will show the following two lemmas which 
derive a differential inequality for $\into \utau^p \vphi(\vtau)$. 

%
%
%
%
\begin{lem}\label{lem;difineq1}
Assume that \eqref{mainass} is satisfied 
with some $\vtheta\in (0,1)$, $a\ge0$, $k>1$ and $\chi_0>0$ satisfying  
\begin{align}\label{ass;P-P}
  \frac{\left(
  (1-\lambda+2\lambda\ep)_+ p + \sqrt{p(p\lambda^2 + p - 2p\lambda + 4\ep p\lambda + 4\lambda -4\ep\lambda)}\right)\chi_0}{2\lambda(1-\ep)} 
  - \frac{k}{\lambda} (a+\eta)^k 
  \le 0
\end{align}
with some $\ep\in (0,\frac{1}{2})$ and $p>1$. 
Then 
\begin{align}\label{ineq;difuphi}
 &\frac d{dt}\into \utau^p\vphi(\vtau) 
 \le 
-  \ep p(p-1)\into \utau^{p-2}\vphi(\vtau)
 |\nabla \utau|^2  
 + \frac r\lambda \into \utau^p\vphi(\vtau)
 \frac{\vtau}{(a+\vtau)^k}
\end{align}
holds, where 
\begin{align}\label{def;r}
r:=\lambda (p-1)\chi_0\sqrt{\frac{p}{p\lambda^2 -2p \lambda +p +4p\ep \lambda+4\lambda-4\ep \lambda}}.
\end{align}
\end{lem}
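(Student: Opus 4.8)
The plan is to differentiate $\into\utau^p\vphi(\vtau)$ along \eqref{cp} and to reorganise the resulting gradient integrals into a single pointwise quadratic form whose nonpositivity is governed by \eqref{ass;P-P} and by the explicit choice \eqref{def;r} of $r$. Concretely, I would differentiate under the integral and substitute $(\utau)_t=\Delta\utau-\nabla\cdot(\utau\chi(\vtau)\nabla\vtau)$ together with $(\vtau)_t=\tfrac1\lambda(\Delta\vtau-\vtau+\utau)$ from the second equation. Integrating by parts under the homogeneous Neumann conditions (so that every boundary integral vanishes) and using the identities $\vphi'(s)=-\tfrac{r}{(a+s)^k}\vphi(s)$ and $\vphi''(s)=\big(\tfrac{rk}{(a+s)^{k+1}}+\tfrac{r^2}{(a+s)^{2k}}\big)\vphi(s)$, which follow immediately from the definition of $\vphi$, the quantity $\tfrac{d}{dt}\into\utau^p\vphi(\vtau)$ splits into the dissipative term $-p(p-1)\into\utau^{p-2}\vphi(\vtau)|\nabla\utau|^2$, several mixed integrals carrying $\nabla\utau\cdot\nabla\vtau$ or $|\nabla\vtau|^2$, and the two zeroth-order contributions produced by $-\vtau+\utau$.

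The bookkeeping of the non-gradient terms is then immediate: the contribution of $-\tfrac1\lambda\vtau$ is exactly $\tfrac r\lambda\into\utau^p\vphi(\vtau)\tfrac{\vtau}{(a+\vtau)^k}$, i.e.\ the second term on the right of \eqref{ineq;difuphi}, while the contribution of $+\tfrac1\lambda\utau$ equals $-\tfrac r\lambda\into\utau^{p+1}\tfrac{\vphi(\vtau)}{(a+\vtau)^k}\le0$ and is discarded. I emphasise that the $|\nabla\vtau|^2$ integrals arising from $\vphi''$ and from the chemotactic flux are \emph{not} thrown away: although each is nonpositive, they are precisely what will be used to dominate the mixed terms. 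For the final form I keep only the fraction $-\ep p(p-1)\into\utau^{p-2}\vphi(\vtau)|\nabla\utau|^2$ of the dissipation, leaving a $(1-\ep)$-fraction available.

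The heart of the matter is to show that, after dividing out the nonnegative weight $\utau^{p-2}\vphi(\vtau)$, the remaining integrand is a nonnegative quadratic form in the two vectors $Z=\nabla\utau$ and $W=\utau\nabla\vtau$, namely $\tilde A|Z|^2-\tilde B\,Z\cdot W+\tilde C|W|^2\ge0$, where $\tilde A=(1-\ep)p(p-1)$, $\tilde B=pr\big(1+\tfrac1\lambda\big)\tfrac{1}{(a+\vtau)^k}+p(p-1)\chi(\vtau)$, and $\tilde C$ gathers the good $|\nabla\vtau|^2$ coefficients $pr\chi(\vtau)\tfrac{1}{(a+\vtau)^k}+\tfrac{rk}\lambda\tfrac{1}{(a+\vtau)^{k+1}}+\tfrac{r^2}\lambda\tfrac{1}{(a+\vtau)^{2k}}$. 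By the Cauchy--Schwarz inequality (which is sharp when $Z$ and $W$ are aligned) this reduces to the scalar discriminant condition $\tilde B^2\le4\tilde A\tilde C$, together with $\tilde A,\tilde C\ge0$, which are obvious.

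I expect this discriminant to be the main obstacle, and two reductions make it tractable. Since $\tilde B^2-4\tilde A\tilde C$ is convex in $\chi(\vtau)$ (the square is convex in $\chi(\vtau)$, the product $4\tilde A\tilde C$ is affine in it), it suffices to verify the inequality at the two extreme values $\chi(\vtau)=0$ and $\chi(\vtau)=\tfrac{\chi_0}{(a+\vtau)^k}$ permitted by \eqref{mainass}; and after dividing through by $(a+\vtau)^{-2k}$ the only surviving spatial dependence is a single factor $(a+\vtau)^{k-1}$ on the favourable side, which the uniform lower bound $\vtau\ge\eta$ from Lemma \ref{lem;localexistence} lets me replace by $(a+\eta)^{k-1}$ (here $k>1$ is used). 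Both endpoint cases thus become polynomial inequalities among $p,\lambda,\ep,k,a,\eta,\chi_0$. At this point the explicit $r$ of \eqref{def;r} enters decisively: it is well defined precisely because the radicand $D:=p\lambda^2+p-2p\lambda+4\ep p\lambda+4\lambda-4\ep\lambda$ is positive, which is exactly Lemma \ref{lem;ftau}, and the value $r=\lambda(p-1)\chi_0\sqrt{p/D}$ (note the prefactor $\lambda$, which tames the $\tfrac1\lambda$ appearing in $\tilde B$ and $\tilde C$) is chosen so as to optimise the discriminant. Substituting this $r$ and invoking the standing hypothesis \eqref{ass;P-P} is what finally yields $\tilde B^2\le4\tilde A\tilde C$, and hence \eqref{ineq;difuphi}.
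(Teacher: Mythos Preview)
Your argument is correct and follows essentially the same route as the paper: differentiate $\into\utau^p\vphi(\vtau)$, integrate by parts, retain an $\ep$-fraction of the dissipation, discard the nonpositive $-\tfrac r\lambda\into\utau^{p+1}\vphi(\vtau)(a+\vtau)^{-k}$ term, and reduce matters to showing that the residual gradient contribution is nonpositive; the paper does this by applying Young's inequality to the mixed term (which is exactly your discriminant condition $\tilde B^2\le4\tilde A\tilde C$) and then verifying that the resulting $|\nabla\vtau|^2$-coefficient $H_{r,\ep}$ is nonpositive under \eqref{ass;P-P} with $r$ as in \eqref{def;r}, citing \cite[Lemma~4.1]{Mizukami-Yokota_02}. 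The only organisational difference is in how the unknown $\chi(\vtau)$ is eliminated: the paper expands $H_{r,\ep}$ as a quadratic in $r$ and bounds each coefficient separately using $0\le\chi(\vtau)\le\chi_0(a+\vtau)^{-k}$ (this is where the $(1-\lambda+2\lambda\ep)_+$ arises), whereas you observe that $\tilde B^2-4\tilde A\tilde C$ is convex in $\chi$ and check the two endpoints---both manoeuvres lead to the same algebraic constraint, and your endpoint $\chi=\chi_0(a+\vtau)^{-k}$ reproduces precisely the paper's $H_{r,\ep}$ when $1-\lambda+2\lambda\ep\ge0$.
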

\begin{proof}
In light of integration by parts and the Young inequality, 
we obtain from straightforward calculations that 
\begin{align*}
   \frac{d}{dt}\into \utau^p \vphi(\vtau) &= 
   -p(p-1)\into \utau^{p-2}\vphi(\vtau)|\nabla \utau|^2
 \\ &\quad\,
   + \into \utau^{p-1} \left(p(p-1)\chi(\vtau)\vphi(\vtau)
   -\left(1+\frac{1}{\lambda}\right)p\vphi'(\vtau)\right)\nabla \utau\cdot \nabla\vtau
   \\
   &\quad\, 
   +\into \utau^p \left( p\chi(\vtau)\vphi'(\vtau)-\frac{1}{\lambda}\vphi''(\vtau) \right)|\nabla \vtau|^2
  + \frac{1}{\lambda}\into \utau^p \vphi'(\vtau)(\utau-\vtau)
\\
  &\le -\ep p(p-1)\into \utau^{p-2}|\nabla \utau|^2 
  + \into \utau^p\vphi(\vtau)H_{r,\ep}(\vtau)|\nabla \vtau|^2
  \\&\quad\,
  + \frac{r}{\lambda}\into \utau^p \vphi(\vtau)\frac{\vtau}{(a+\vtau)^k} 
\end{align*}
holds for all $\ep\in (0,\frac{1}{2})$, where  
\begin{align*}
  H_{r,\ep}(s) &:=  
  \frac{p\lambda^2 -2p \lambda +p +4p\ep \lambda+4\lambda-4\ep \lambda}
  {4\lambda^2(1-\ep)(p-1)(a+s)^{2k}}r^2
\\ &\quad\
  + \left(
    \frac{(1-\lambda+2\lambda\ep)_+p\chi_0}{2\lambda(1-\ep)(a+s)^{2k}} 
    - \frac{k}{\lambda(a+s)^{k+1}}
  \right)r
  + \frac{p(p-1)\chi_0^2}{4(1-\ep)(a+s)^{2k}}. 
\end{align*}
Now since \eqref{ass;P-P} holds with 
some $\ep\in(0,\frac{1}{2})$ and $p>1$, an argument similar to 
that in the proof of \cite[Lemma 4.1]{Mizukami-Yokota_02} implies that 
\begin{align*}
  H_{r,\ep} (s) \le 0 \quad 
  \mbox{for all}\ s\ge \eta 
\end{align*}
with $r>0$ defined as \eqref{def;r}, 
which leads to the end of the proof. 
\end{proof}

%
%
%
%
\begin{lem}\label{lem;difineq2}
Assume that \eqref{mainass} and \eqref{ass;P-P} 
are satisfied 
with $\ep\in (0,\frac{1}{2})$ and $p>1$. 
Then there exist $b>1$ and $c_1,c_2,c_3>0$ such that 
\begin{align*}
\frac{d}{dt}\into \utau^p \vphi(\vtau)
\le 
-c_1\left( \into \utau^p \vphi(\vtau)\right)^b
+ c_2 \into \utau^p \vphi(\vtau) 
+c_3.
\end{align*}
Moreover, if there are $\lambda_0>0$, $p>1$ and $\ep\in(0,\frac{1}{2})$ such that 
\eqref{ass;P-P} holds for all $\lambda\in (0,\lambda_0)$, then the constants 
$b, c_1,c_2,c_3$ are independent of $\lambda\in (0,\lambda_0)$. 
\end{lem}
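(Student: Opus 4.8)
The plan is to start from the differential inequality \eqref{ineq;difuphi} supplied by Lemma \ref{lem;difineq1} and reduce everything to converting its dissipative first term into a superlinear power of $y(t):=\into\utau^p\vphi(\vtau)$, while showing the second term is harmless. First I would record two elementary boundedness facts. Since $k>1$, the integral $\int_\eta^s(a+\sigma)^{-k}\,d\sigma$ stays bounded as $s\to\infty$, so $\vphi$ is squeezed between two positive constants, $0<\kappa\le\vphi(s)\le 1$ for all $s\ge\eta$; likewise $s\mapsto s(a+s)^{-k}$ is bounded on $[\eta,\infty)$, since it decays like $s^{1-k}$. Hence the last term of \eqref{ineq;difuphi} is estimated by $c_2\,y(t)$ with $c_2:=\tfrac r\lambda\sup_{s\ge\eta}\tfrac{s}{(a+s)^k}<\infty$.

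For the dissipation I would pass to $w:=\utau^{p/2}$, so that $\into\utau^{p-2}\vphi(\vtau)|\nabla\utau|^2=\tfrac{4}{p^2}\into\vphi(\vtau)|\nabla w|^2\ge\tfrac{4\kappa}{p^2}\lp{2}{\nabla w}^2$ by $\vphi\ge\kappa$. The heart of the argument is a Gagliardo--Nirenberg inequality combined with the mass conservation $\into\utau=m$ from Lemma \ref{lem;localexistence}: writing $\lp{2/p}{w}^{2/p}=\into\utau=m$, one obtains $\lp{2}{w}^2\le C_{\rm GN}\big(\lp{2}{\nabla w}^{2\theta}\,m^{p(1-\theta)}+m^p\big)$ with $\theta=\tfrac{n(p-1)}{n(p-1)+2}\in(0,1)$. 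Solving for the gradient term and absorbing the additive constant gives $\lp{2}{\nabla w}^2\ge c\,\big(\lp{2}{w}^2\big)^{1/\theta}-C$ for suitable $c,C>0$. Finally $\vphi\le 1$ yields $y\le\lp{2}{w}^2$, so this becomes $\lp{2}{\nabla w}^2\ge c\,y^{b}-C$ with $b:=1/\theta=1+\tfrac{2}{n(p-1)}>1$.

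Substituting the last bound together with the estimate $c_2 y$ back into \eqref{ineq;difuphi} produces $y'\le -c_1 y^b+c_2 y+c_3$, where $c_1:=\tfrac{4\ep(p-1)}{p}\kappa\,c$ and $c_3:=\tfrac{4\ep(p-1)}{p}\kappa\,C$, which is exactly the assertion. For the uniform-in-$\lambda$ statement I would only track how each constant depends on $\lambda$: the exponent $b$ depends solely on $n$ and $p$, and the Gagliardo--Nirenberg constants depend only on $n,p,m$. By \eqref{def;r} one has $r\to 0$ and $\tfrac r\lambda\to(p-1)\chi_0$ as $\lambda\searrow 0$ (the quantity under the square root in \eqref{def;r} tends to $p$ and is positive for all $\lambda\ge 0$ by Lemma \ref{lem;ftau}), so both $\kappa$ and $c_2$ remain within fixed positive bounds for $\lambda\in(0,\lambda_0)$; hence $b,c_1,c_2,c_3$ are all independent of $\lambda\in(0,\lambda_0)$.

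I expect the Gagliardo--Nirenberg step to be the main obstacle: it must deliver a genuinely superlinear power $y^b$ with $b>1$, which requires using the interpolation norm with the possibly sub-one exponent $2/p$ when $p>2$ and a clean extraction of the power from $\big(\lp{2}{w}^2-C\big)^{1/\theta}$. Everything else---the two boundedness observations for $\vphi$ and for $s(a+s)^{-k}$, and the $\lambda$-bookkeeping---is routine once the behaviour of $r$ near $\lambda=0$ is pinned down.
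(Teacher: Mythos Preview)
Your proposal is correct and follows essentially the same route as the paper: start from \eqref{ineq;difuphi}, bound the zeroth-order term via the boundedness of $s\mapsto s(a+s)^{-k}$, use the two-sided bound $\kappa\le\vphi\le 1$ together with Gagliardo--Nirenberg on $w=\utau^{p/2}$ and mass conservation to convert the gradient term into $c\,y^{1/\theta}-C$, and then track the $\lambda$-dependence through $r$ and $r/\lambda$ using Lemma~\ref{lem;ftau}. The paper's proof differs only cosmetically---it keeps $\vphi$ inside the gradient integral a bit longer before applying the $\vphi$-bounds, and for the uniform-in-$\lambda$ part it invokes the explicit lower bound $C_4$ on the polynomial from Lemma~\ref{lem;ftau} rather than your (equally valid) continuity-plus-limit argument; your exponent $b=1+\tfrac{2}{n(p-1)}$ coincides with the paper's $1/\alpha$.
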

\begin{proof}
By virtue of Lemma \ref{lem;difineq1}, 
we have that \eqref{ineq;difuphi} 
holds with $r>0$ defined as \eqref{def;r}. 
We first obtain from the boundedness of the function 
$s\mapsto \frac{s}{(a+s)^k}$ on $[\eta,\infty)$ $(k>1)$  
that there is a constant $C_1>0$ 
satisfying  
\begin{align}\label{inequ;boundednessofs}
  \into \utau^p \vphi(\vtau)\frac{\vtau}{(a+\vtau)^k} 
  \le C_1 \into \utau^p \vphi(\vtau). 
\end{align}
Then the fact 
\begin{align}\label{ineq;phiul}
  \exp\left\{
    \frac{-r}{(k-1)(a+\eta)^{k-1}}
  \right\} 
  \le \vphi(s) \le 1 
  \quad \mbox{for all}\ s\ge \eta
\end{align}
and the Gagliardo--Nirenberg inequality 
\begin{align*}
  \lp{2}{\utau^\frac{p}{2}} 
  \le C_2 \left(
  \lp{2}{\nabla \utau^{\frac{p}{2}}}+\lp{\frac{2}{p}}{\utau^\frac{p}{2}}
  \right)^\alpha 
  \lp{\frac{2}{p}}{\utau^{\frac{p}{2}}}^{(1-\alpha)}
\end{align*}
with $\alpha:=\frac{\frac{pn}{2}-\frac{n}{2}}{\frac{pn}{2}+1-\frac{n}{2}}\in (0,1)$ 
and some constant $C_2>0$ imply that 
there exists $C_3>0$ such that 
\begin{align}\label{esti;nablau}
  \into \utau^p\vphi(\vtau) 
  \le C_3 \left(
  \exp\left\{\frac{r}{(k-1)(a+\eta)^{k-1}}\right\}
  \into \utau^{p-2}\vphi(\vtau)|\nabla \utau|^2+1
  \right)^\alpha. 
\end{align}
Therefore a combination of 
\eqref{ineq;difuphi}, \eqref{inequ;boundednessofs} and \eqref{esti;nablau} yields 
\begin{align}\label{ineq;difuphi2}\notag
  \frac d{dt}\into \utau^p\vphi(\vtau) 
  &\le  
  -\ep p(p-1)C_3^{-\frac 1\alpha} \exp\left\{\frac{-r}{(k-1)(a+\eta)^{k-1}}\right\}
  \left(\into \utau^p\vphi(\vtau)\right)^{\frac 1\alpha} 
\\&\quad \,
  + \frac {rC_1}{\lambda} \into \utau^p\vphi(\vtau)  
  + \exp\left\{\frac{-r}{(k-1)(a+\eta)^{k-1}}\right\}. 
\end{align}
Moreover, if there are $\lambda_0>0$, $p>1$ and $\ep\in(0,\frac{1}{2})$ such that 
\eqref{ass;P-P} holds for all $\lambda\in (0,\lambda_0)$, then noting from 
the definition of $r$ (see \eqref{def;r}) and the existence of $C_4>0$ 
satisfying 
$
  p\lambda^2 -2p \lambda +p +4p\ep \lambda+4\lambda-4\ep \lambda \ge C_4
$ 
for all $\lambda\in [0,\lambda_0]$ (from Lemma \ref{lem;ftau}) that 
\begin{align}\label{esti;unifor;phi}
  &1\le \exp\left\{\frac{r}{(k-1)(a+\eta)^{k-1}}\right\}\le 
  \exp\left\{  
  \frac{\lambda_0 (p-1)\chi_0}{(k-1)(a+\eta)^{k-1}}\sqrt{\frac{p}{C_4}} 
  \right\}=:C_5
\end{align}
and
\begin{align*}
  \frac{rC_1}{\lambda} \le C_1
  (p-1)\chi_0\sqrt{\frac{p}{C_4}} =:C_6,  
\end{align*}
we can see from \eqref{ineq;difuphi2} that 
\begin{align*}
  \frac d{dt}\into \utau^p\vphi(\vtau) 
  &\le  
  -\ep p(p-1)C_3^{-\frac 1\alpha} C_5^{-1}
  \left(\into \utau^p\vphi(\vtau)\right)^{\frac 1\alpha} 
  + C_6 \into \utau^p\vphi(\vtau)  
  + 1. 
\end{align*}
Thus we can show this lemma. 
\end{proof}

Now we have already provided all tools to establish 
the $L^p$-estimate for $\utau$ 
under the condition \eqref{ass;P-P}. 
%
%
%
%

\begin{lem}\label{lem;estimateforu}
Assume that \eqref{mainass} and \eqref{ass;P-P} 
are satisfied 
with some $\ep\in(0,\frac{1}{2})$ and $p>1$. 
Then there exists $C>0$ such that 
\begin{align*}
 \lp{p}{\utau\cd}\le C \quad\mbox{for all}\ t\in (0,\tmax). 
\end{align*}
Moreover, if there are $\lambda_0>0$, $p>1$ and $\ep\in(0,\frac{1}{2})$ such that 
\eqref{ass;P-P} holds for all $\lambda\in (0,\lambda_0)$, 
then $C$ is independent of $\lambda\in (0,\lambda_0)$. 
\end{lem}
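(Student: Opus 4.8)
The plan is to treat the differential inequality furnished by Lemma \ref{lem;difineq2} as a scalar ODE in the single quantity $y(t):=\into\utau^p\vphi(\vtau)$, extract an a priori bound by an elementary comparison, and then convert this weighted bound into the asserted $L^p$-estimate using the pointwise lower bound for $\vphi$.

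First I would record that, by Lemma \ref{lem;difineq2}, $y$ satisfies $y'(t)\le -c_1 y(t)^b+c_2 y(t)+c_3$ on $(0,\tmax)$ with $b>1$ and $c_1,c_2,c_3>0$. Since $b>1$, the map $s\mapsto c_2 s+c_3-\tfrac{c_1}{2}s^b$ is bounded above on $[0,\infty)$, so there is $C'>0$ with $c_2 s+c_3\le\tfrac{c_1}{2}s^b+C'$ for all $s\ge 0$; hence $y'\le-\tfrac{c_1}{2}y^b+C'$. Setting $M:=(2C'/c_1)^{1/b}$, the right-hand side is strictly negative whenever $y>M$, and a routine comparison argument then gives $y(t)\le\max\{y(0),M\}$ for all $t\in(0,\tmax)$. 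The initial value $y(0)=\into\uinit^p\vphi(\vinit)$ is finite because $\uinit\in C(\ol\Omega)$ is bounded and $0<\vphi\le 1$ on $[\eta,\infty)$ (note $\vinit\ge\eta$ by definition of $\eta$), whence $y(0)\le|\Omega|\,\lp{\infty}{\uinit}^p$.

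Next I would transfer this to a genuine $L^p$-bound. By Lemma \ref{lem;localexistence} we have $\inf_{x\in\Omega}\vtau(x,t)\ge\eta$, so the lower estimate in \eqref{ineq;phiul} yields $\vphi(\vtau)\ge\exp\{-r/((k-1)(a+\eta)^{k-1})\}$ pointwise. Dividing the bound on $y(t)$ by this positive constant gives $\into\utau^p\le\exp\{r/((k-1)(a+\eta)^{k-1})\}\max\{y(0),M\}=:C^p$, that is $\lp{p}{\utau\cd}\le C$ uniformly in $t\in(0,\tmax)$, which is the first assertion.

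For the $\lambda$-uniform statement the structure is identical; the only point requiring care is that no constant degenerates as $\lambda\searrow 0$. When \eqref{ass;P-P} holds throughout $(0,\lambda_0)$, Lemma \ref{lem;difineq2} already provides $\lambda$-independent $b,c_1,c_2,c_3$, so $M$ and $C'$ are $\lambda$-independent, and $y(0)$ is manifestly so. The delicate quantity is the factor $\exp\{r/((k-1)(a+\eta)^{k-1})\}$ multiplying $\max\{y(0),M\}$: a priori $r$ from \eqref{def;r} could blow up as $\lambda\to 0$ through $\sqrt{p/(p\lambda^2-2p\lambda+p+4p\ep\lambda+4\lambda-4\ep\lambda)}$. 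This is exactly what Lemma \ref{lem;ftau} rules out, by bounding the radicand below by a positive $\lambda$-independent constant on $[0,\lambda_0]$; the resulting uniform bound is \eqref{esti;unifor;phi}. I therefore expect the one genuine obstacle to be this uniformity of the $\vphi$ lower bound, everything else being a mechanical ODE comparison layered on top of Lemma \ref{lem;difineq2}.
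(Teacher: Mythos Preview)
Your proposal is correct and follows essentially the same route as the paper's own proof: invoke Lemma~\ref{lem;difineq2}, apply a standard ODE comparison to bound $\into \utau^p\vphi(\vtau)$, and then use the pointwise lower bound \eqref{ineq;phiul} on $\vphi$ (together with \eqref{esti;unifor;phi} for the $\lambda$-uniform part) to extract the $L^p$-estimate. The paper merely says ``the standard ODE comparison argument'' where you spell out the absorption $c_2 s+c_3\le\tfrac{c_1}{2}s^b+C'$ and the resulting bound $y(t)\le\max\{y(0),M\}$; your explicit treatment of $y(0)$ and of the uniformity of the $\vphi$ lower bound via Lemma~\ref{lem;ftau} and \eqref{esti;unifor;phi} matches exactly what the paper sketches.
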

\begin{proof}
Since \eqref{ineq;phiul} holds, 
Lemma \ref{lem;difineq2} and the standard ODE comparison argument 
lead to the $L^p$-estimate for $\utau$. 
Moreover, if there are $\lambda_0>0$, $p>1$ and $\ep\in(0,\frac{1}{2})$ such that 
\eqref{ass;P-P} holds for all $\lambda\in (0,\lambda_0)$, 
then a combination of Lemma \ref{lem;difineq2} and \eqref{ineq;phiul}, 
along with \eqref{esti;unifor;phi} implies the desired uniform-in-$\lambda$ 
estimate. 
\end{proof}

Now we are ready to attain the $L^p$-estimate for $\utau$ 
under the condition \eqref{mainass2} or \eqref{mainass3}. 
Here we note that 
\eqref{mainass2} is the case of \eqref{ass;P-P} with $p=\frac n2$ and $\ep=0$, 
and \eqref{mainass3} is the case of  \eqref{ass;P-P} 
with $p=\frac n2$, $\ep=0$ and $\lambda=0$. 
%
%
%
%
\begin{corollary}\label{lem;P-PLp}
 Assume that \eqref{mainass} and \eqref{mainass2} 
 are satisfied. 
 Then there exist $p>\frac{n}{2}$ and $C>0$ 
 such that 
 \begin{align*}
   \lp{p}{\utau\cd}\le C
   \quad\mbox{for all}\ t\in (0,\tmax). 
 \end{align*}
\end{corollary}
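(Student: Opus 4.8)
The plan is to reduce the statement to Lemma \ref{lem;estimateforu}, whose hypothesis is precisely the condition \eqref{ass;P-P} for some admissible pair $(p,\ep)$ with $p>1$ and $\ep\in(0,\frac12)$; once such a pair is found with the additional feature $p>\frac n2$, that lemma immediately furnishes the asserted $L^p$-bound. The only thing to arrange is the passage from the hypothesis \eqref{mainass2} of the corollary to the hypothesis \eqref{ass;P-P} of the lemma. As recorded in the remark preceding the corollary, \eqref{mainass2} is exactly \eqref{ass;P-P} read at the boundary values $p=\frac n2$ and $\ep=0$, and these values lie on the boundary of the admissible range rather than inside it. I would bridge this gap by a continuity (openness) argument that exploits the strictness of \eqref{mainass2}.

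To this end, let $F(p,\ep)$ denote the left-hand side of \eqref{ass;P-P}, viewed as a function of $(p,\ep)$ for fixed $\lambda>0$, so that \eqref{ass;P-P} is the inequality $F(p,\ep)\le 0$, while the stated correspondence makes \eqref{mainass2} the strict inequality $F(\tfrac n2,0)<0$. The function $F$ is continuous near $(\tfrac n2,0)$: the positive-part map $s\mapsto s_+$ is continuous, and the radicand appearing in $F$ stays strictly positive there. Indeed, at $(\tfrac n2,0)$ the quantity under the square root equals $\tfrac n2\bigl(\tfrac n2(\lambda-1)^2+4\lambda\bigr)>0$, and on the open admissible range it is positive by Lemma \ref{lem;ftau}; hence no singularity of the square root is encountered as $\ep\downarrow0$ and $p\downarrow\tfrac n2$.

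Because $F(\tfrac n2,0)<0$ and $F$ is continuous, there is a whole neighbourhood of $(\tfrac n2,0)$ on which $F<0$. In this neighbourhood I would pick a pair $(p,\ep)$ with $p>\frac n2$ and $\ep\in(0,\frac12)$; since $n\ge2$ forces $p>\frac n2\ge1$, the pair is admissible, and by construction it satisfies \eqref{ass;P-P}. Applying Lemma \ref{lem;estimateforu} with this $(p,\ep)$ then yields a constant $C>0$ with $\lp{p}{\utau\cd}\le C$ for all $t\in(0,\tmax)$, which is the claim, with an exponent $p>\frac n2$ as required for the subsequent use in Lemma \ref{lem;estimateforufromp}.

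The sole delicate point is the continuity of $F$ at the corner $(\tfrac n2,0)$, i.e.\ verifying that the radicand neither vanishes nor becomes negative in the limit; this is secured by the explicit positivity $\tfrac n2(\lambda-1)^2+4\lambda>0$ together with Lemma \ref{lem;ftau}. Beyond this, the argument is a routine openness statement, and no new estimate is needed, all quantitative content having already been assembled in Lemmas \ref{lem;difineq1} through \ref{lem;estimateforu}.
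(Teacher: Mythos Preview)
Your proposal is correct and is essentially the same argument as the paper's, which simply says ``we obtain from the continuity argument that there are $p>\frac n2$ and $\ep\in(0,\frac12)$ such that \eqref{ass;P-P} holds'' and then applies Lemma~\ref{lem;estimateforu}. You have just made the continuity step explicit by introducing $F(p,\ep)$, checking positivity of the radicand at the corner via Lemma~\ref{lem;ftau} and the identity $\tfrac n2(\lambda-1)^2+4\lambda>0$, and using the strictness of \eqref{mainass2} to pass to an open neighbourhood; nothing beyond this is needed.
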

\begin{proof}
Invoking to \eqref{mainass2}, 
we obtain from the continuity argument that there are 
$p>\frac n2$ and $\ep\in (0,\frac 12)$ such that \eqref{ass;P-P} holds. 
Thus Lemma \ref{lem;estimateforu} enables us to see 
the $L^p$-estimate for $\utau$. 
\end{proof}
%
%
%
%
\begin{corollary}\label{lem;P-ELp}
 Assume that \eqref{mainass} and \eqref{mainass3} 
 are satisfied. 
 Then there exist $\lambda_0\in (0,1)$, $p>\frac{n}{2}$ 
 and $C>0$ such that 
 \begin{align*}
 \lp{p}{\utau\cd}\le C
 \end{align*}
for all $t\in (0,\tmax)$ and any $\lambda\in (0,\lambda_0)$. 
\end{corollary}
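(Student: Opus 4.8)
The plan is to reduce the assertion to the uniform-in-$\lambda$ clause of Lemma \ref{lem;estimateforu}. That lemma already tells us that, as soon as the single condition \eqref{ass;P-P} can be arranged to hold for all $\lambda\in(0,\lambdaz)$ with one fixed choice of $p>\frac n2$ and $\ep\in(0,\frac12)$, the resulting $L^p$-bound for $\utau$ carries a constant independent of $\lambda$. Thus the entire problem becomes the following algebraic one: starting from the $\lambda$-free hypothesis \eqref{mainass3}, manufacture a triple $(p,\ep,\lambdaz)$ with $p>\frac n2$, $\ep\in(0,\frac12)$, $\lambdaz\in(0,1)$ such that \eqref{ass;P-P} is valid for every $\lambda\in(0,\lambdaz)$. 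This is the $\lambda$-uniform counterpart of the continuity argument already used in Corollary \ref{lem;P-PLp}.

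First I would rewrite \eqref{ass;P-P} so that it makes sense down to $\lambda=0$. For $\lambda>0$ and $\ep<1$ the quantity $2\lambda(1-\ep)$ is positive, so \eqref{ass;P-P} is equivalent to $G(p,\ep,\lambda)\le 0$, where
\begin{align*}
  G(p,\ep,\lambda)
  &:=\Big((1-\lambda+2\lambda\ep)_+\,p
  +\sqrt{p\,\big(p\lambda^2+p-2p\lambda+4\ep p\lambda+4\lambda-4\ep\lambda\big)}\,\Big)\chi_0
\\
  &\quad\,-2k(1-\ep)(a+\eta)^k.
\end{align*}
The multiplication has cancelled the two $\frac1\lambda$-terms, so $G$ extends continuously across $\lambda=0$; moreover Lemma \ref{lem;ftau} guarantees that the radicand is strictly positive for $\ep\in(0,\frac12)$, $p>1$ and $\lambda\ge 0$, so $G$ is a bona fide continuous function of $(p,\ep,\lambda)$ near $(\frac n2,0,0)$. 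At that point the positive part equals $1$ and the radicand reduces to $p^2$, whence $G(\frac n2,0,0)=n\chi_0-2k(a+\eta)^k$, which is strictly negative by \eqref{mainass3}.

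Continuity of $G$ then yields a whole neighbourhood of $(\frac n2,0,0)$ on which $G<0$; concretely, I would first fix $p>\frac n2$ and $\ep\in(0,\frac12)$ close enough to $\frac n2$ and $0$, and then choose $\lambdaz\in(0,1)$ small enough that $G(p,\ep,\lambda)<0$ for all $\lambda\in[0,\lambdaz]$. Reversing the clearing of denominators shows that \eqref{ass;P-P} holds for this fixed $(p,\ep)$ and every $\lambda\in(0,\lambdaz)$, and the uniform-in-$\lambda$ part of Lemma \ref{lem;estimateforu} then delivers $C>0$ with $\lp{p}{\utau\cd}\le C$ for all $t\in(0,\tmax)$ and all $\lambda\in(0,\lambdaz)$. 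The only genuinely delicate point is the limit $\lambda\to 0$: both terms of \eqref{ass;P-P} individually blow up like $\frac1\lambda$, so one must argue with the regularized quantity $G$ rather than with \eqref{ass;P-P} directly, and it is exactly the sign-definiteness supplied by Lemma \ref{lem;ftau} that keeps the square root real throughout the neighbourhood.
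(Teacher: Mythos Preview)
Your proposal is correct and follows exactly the paper's approach: the paper's proof simply asserts that \eqref{mainass3} allows one to find $\lambdaz\in(0,1)$, $p>\frac n2$ and $\ep\in(0,\frac12)$ such that \eqref{ass;P-P} holds for all $\lambda\in(0,\lambdaz)$, and then invokes Lemma \ref{lem;estimateforu}. What you have done is spell out this continuity argument in detail---clearing the $\frac1\lambda$-denominators to obtain the regular function $G$ and evaluating at $(\frac n2,0,0)$---which is precisely the mechanism the paper leaves implicit.
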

\begin{proof}
In light of \eqref{mainass3}, we can find 
$\lambda_0\in(0,1)$, $p>\frac n2$ and $\ep\in(0,\frac 12)$ such that 
\eqref{ass;P-P} holds for all $\lambda\in (0,\lambda_0)$.  
Therefore from Lemma \ref{lem;estimateforu} we obtain this lemma. 
\end{proof}

A combination of 
Lemma \ref{lem;estiforvandnablav} and Corollaries \ref{lem;P-PLp}, \ref{lem;P-ELp} 
implies the following lemma. 
\begin{lem}\label{lem;W1qesti;v}
Assume that \eqref{mainass} and \eqref{mainass2}, 
or \eqref{mainass} and \eqref{mainass3} 
are satisfied. 
Then there exists $C>0$ such that 
\begin{align*}
  \|\vtau\cd\|_{W^{1,q}(\Omega)}\le C 
  \quad \mbox{for all}\ t\in (0,\tmax).
\end{align*} 
Moreover, if \eqref{mainass} and \eqref{mainass3} are satisfied, 
then $C$ is independent of $\lambda\in (0,\lambda_0)$ with some $\lambda_0\in (0,1)$. 
\end{lem}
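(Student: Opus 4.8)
The plan is to combine the $L^p$-estimates for $\utau$ with $p>\frac n2$ (already furnished by Corollaries \ref{lem;P-PLp} and \ref{lem;P-ELp}) with the smoothing estimates for the Neumann heat semigroup collected in Lemma \ref{lem;estiforvandnablav}. The target is a $W^{1,q}(\Omega)$-bound for $\vtau$ with the $q>n$ appearing in \eqref{ini}. Since $W^{1,q}(\Omega)$ is controlled by the two pieces $\lp{q}{\vtau\cd}$ and $\lp{q}{\nabla\vtau\cd}$, I would estimate these separately through parts (i) and (ii) of Lemma \ref{lem;estiforvandnablav}, feeding in the available $L^p$-bound on $\utau$ as the source term.

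First I would invoke the relevant corollary to fix $p>\frac n2$ and a constant bounding $\sup_{t}\lp{p}{\utau\cd}$: under \eqref{mainass2} this is Corollary \ref{lem;P-PLp}, while under \eqref{mainass3} it is Corollary \ref{lem;P-ELp}, which additionally supplies $\lambda_0\in(0,1)$ and a constant independent of $\lambda\in(0,\lambda_0)$. Next I would check that the exponent conditions of Lemma \ref{lem;estiforvandnablav} hold with $\theta=p$ and $\mu=q$. For the gradient bound \eqref{esti;nablav;fromu} the requirement is $\frac12+\frac n2\bigl(\frac1p-\frac1q\bigr)<1$, i.e. $\frac n2\bigl(\frac1p-\frac1q\bigr)<\frac12$; since $p>\frac n2$ gives $\frac{n}{2p}<1$, one has room to choose $q>n$ large enough—indeed $q\to\infty$ makes the left side approach $\frac{n}{2p}<1$, so the strict inequality is attainable. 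The condition for \eqref{esti;v;fromu} is weaker and follows a fortiori. Applying Lemma \ref{lem;estiforvandnablav}(i) and (ii) with the uniform $L^p$-bound on $\utau$ then yields constants bounding $\lp{q}{\vtau\cd}$ and $\lp{q}{\nabla\vtau\cd}$, and adding these gives the claimed $W^{1,q}$-estimate.

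For the uniform-in-$\lambda$ refinement I would observe that every constant entering the argument is $\lambda$-free: the constants $C(\theta,\mu)$ in Lemma \ref{lem;estiforvandnablav} are explicitly independent of $\lambda$ (they come from the rescaled heat-semigroup estimates), and under \eqref{mainass3} the $L^p$-bound from Corollary \ref{lem;P-ELp} is independent of $\lambda\in(0,\lambda_0)$. Hence the resulting $W^{1,q}$-bound inherits this uniformity. The mild subtlety—not a real obstacle—is to verify compatibility of the $q>n$ fixed in \eqref{ini} with the admissibility range of Lemma \ref{lem;estiforvandnablav}; since the exponent inequality for the gradient estimate becomes easier as $q$ grows, one can always enlarge $q$ (which only strengthens the conclusion) so that the hypothesis is met, and then note that control of $\vtau$ in the larger space dominates control in the prescribed one. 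Overall this is a short deduction: the genuine work was already done in establishing the $L^p$-estimate for $\utau$, and the present lemma is essentially a bookkeeping application of the smoothing estimates.
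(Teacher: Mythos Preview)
Your overall strategy matches the paper's one-line argument---feed the $L^p$-bound on $\utau$ from Corollaries~\ref{lem;P-PLp}/\ref{lem;P-ELp} into Lemma~\ref{lem;estiforvandnablav}---but your verification of the exponent condition in Lemma~\ref{lem;estiforvandnablav}(ii) is wrong. The requirement is $\frac{n}{2}\bigl(\frac{1}{p}-\frac{1}{q}\bigr)<\frac{1}{2}$, yet you argue that as $q\to\infty$ the left side tends to $\frac{n}{2p}$ and note this is ``$<1$''. You need it to be $<\tfrac12$, which would force $p>n$, whereas the continuity argument in the corollaries only furnishes some $p$ barely above $\tfrac n2$. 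You also have the monotonicity backwards: enlarging $q$ makes $\frac{1}{p}-\frac{1}{q}$ \emph{larger}, so the hypothesis of (ii) becomes harder, not easier. Hence for the fixed $q>n$ prescribed by \eqref{ini} there is no reason the exponent condition is met with $\theta=p$ and $\mu=q$.

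The cleanest repair is to interpose Lemma~\ref{lem;estimateforufromp}: from the $L^p$-bound with $p>\tfrac n2$ one first extracts a uniform $L^\infty$-bound on $\utau$ (that lemma explicitly records $\lambda$-independence when $p$ and the bound are $\lambda$-free), and then applies Lemma~\ref{lem;estiforvandnablav} with $\theta=\infty$ and $\mu=q$, for which the conditions $-\frac{n}{2q}<1$ and $\frac12-\frac{n}{2q}<1$ hold trivially for every finite $q$. This is effectively what the proofs of Theorem~\ref{mainth} and Corollary~\ref{mainth2} do immediately afterwards. Alternatively one can bootstrap: with $\theta=p>\tfrac n2$, part (ii) yields $\nabla\vtau\in L^\mu$ for any $\mu<\frac{np}{(n-p)_+}$, and this range already exceeds $n$; feeding that back through the semigroup representation \eqref{semi;utau} improves the integrability of $\utau$ in finitely many steps. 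Either way, the direct ``take $q$ large'' argument in your write-up does not go through as stated.
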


\begin{proof}[{\rm \bf Proof of Theorem \ref{mainth}}]
Lemmas \ref{lem;estimateforufromp}, \ref{lem;W1qesti;v} and 
Corollary \ref{lem;P-PLp} derive Theorem \ref{mainth}. 
\end{proof}

\begin{proof}[{\rm \bf Proof of Corollary \ref{mainth2}}]
Lemmas \ref{lem;estimateforufromp}, \ref{lem;W1qesti;v} 
and Corollary \ref{lem;P-ELp}  
lead to Corollary \ref{mainth2}. 
\end{proof}

%
%

\section{Convergence}

In this section we will show that 
solutions of \eqref{cp} 
converge to those of \eqref{cpp-e} (Theorem \ref{mainth3}). 
Here we note that Arguments in this section are based on those in 
the proof of 
\cite[Theorem 1.1]{Wang-Winkler-Xiang}; 
thus I shall only show brief proofs. 
Here we assume that \eqref{mainass} and \eqref{mainass3} are satisfied. 
Then thanks to Corollary \ref{mainth2}, there exists $\lambda_0\in (0,1)$ such that 
for all $\lambda\in (0,\lambda_0)$ the problem \eqref{cp} possesses a unique global classical 
solution $(\utau,\vtau)$ satisfying the uniform-in-$\lambda$ estimate 
\eqref{estimate;uniformintau}. 
We first confirm the following lemma 
which is a cornerstone of this work. 
\begin{lem}\label{lem;boundedinholder}
For all sequences of numbers $\{\lambdan\}_{n\in\mathbb{N}}\subset (0,\lambda_0)$ 
satisfying $\lambdan \searrow 0$ as $n\to\infty$ 
there exist a subsequence $\lambdanj \searrow 0$ and functions 
\begin{align*}
  u\in C(\obar\times [0,\infty))\cap 
  C^{2,1}(\obar\times (0,\infty)) 
\ \mbox{and} \ 
  v \in C^{2,0}(\obar\times (0,\infty))\cap 
  L^\infty([0,\infty);W^{1,q}(\Omega)) 
\end{align*}
such that for all $T>0$, 
\begin{align*}
 &\utaunj \to u \quad 
 \mbox{in}\ C_{\rm loc}(\obar\times [0,\infty)), 
\\
 &\vtaunj \to v \quad \mbox{in}\ 
 C_{\rm loc}(\obar\times (0,\infty)) \cap L^2_{\rm loc}((0,\infty);W^{1,2}(\Omega))
\end{align*}
as $j\to \infty$. 
Moreover, $(u,v)$ solves \eqref{cpp-e} 
classically. 
\end{lem}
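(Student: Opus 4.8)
The plan is to run a compactness argument on the uniform-in-$\lambda$ bounds of Corollary \ref{mainth2}, namely $\lp{\infty}{\utau\cd}+\|\vtau\cd\|_{W^{1,q}(\Omega)}\le C$ for all $t>0$ and $\lambda\in(0,\lambda_0)$, together with the uniform lower bound $\inf_\Omega\vtau\ge\eta$ from Lemma \ref{lem;localexistence} (which keeps $\chi(\vtau)\le\chi_0(a+\eta)^{-k}$ bounded). First I would treat the first equation of \eqref{cp} in divergence form $(\utau)_t=\nabla\cdot(\nabla\utau-\utau\,\chi(\vtau)\nabla\vtau)$, where the drift $\chi(\vtau)\nabla\vtau$ is bounded in $L^\infty((0,\infty);L^q(\Omega))$ with $q>n$ and $\utau$ is bounded in $L^\infty$. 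Standard H\"older regularity theory for such quasilinear parabolic equations then yields a $\lambda$-independent bound for $\utau$ in $C^{\theta,\theta/2}_{\rm loc}(\obar\times[0,\infty))$ with some $\theta\in(0,1)$, the continuity up to $t=0$ being inherited from the fixed datum $\uinit\in C(\obar)$. By the Arzel\`a--Ascoli theorem I extract a subsequence $\lambdanj\searrow0$ with $\utaunj\to u$ in $C_{\rm loc}(\obar\times[0,\infty))$.

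The core difficulty is the convergence of $\vtau$, because the factor $\lambda$ multiplying $(\vtau)_t$ makes the second equation degenerate as $\lambda\searrow0$; I would handle it by comparison with the instantaneous elliptic profile. Define $V_\lambda(\cdot,t):=(I-\Delta)^{-1}\utau(\cdot,t)$ as the Neumann solution of $-\Delta V_\lambda+V_\lambda=\utau$, and set $w_\lambda:=\vtau-V_\lambda$, which solves $\lambda(w_\lambda)_t=\Delta w_\lambda-w_\lambda-\lambda(V_\lambda)_t$. The key observation is that $(V_\lambda)_t=(I-\Delta)^{-1}(\utau)_t=(V_\lambda-\utau)-(I-\Delta)^{-1}\nabla\cdot(\utau\chi(\vtau)\nabla\vtau)$ is bounded in $L^\infty$ uniformly in $\lambda$, since $V_\lambda$ and $\utau$ are bounded and $(I-\Delta)^{-1}\nabla\cdot$ maps the flux (bounded in $L^q$) into $W^{1,q}\hookrightarrow L^\infty$. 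Testing the $w_\lambda$-equation with $w_\lambda$ and using this bound gives $\frac{d}{dt}\|w_\lambda\|_{L^2}^2+\frac{1}{\lambda}\|w_\lambda\|_{L^2}^2\le C\lambda$, whence an ODE comparison forces $\sup_{t\ge t_1}\|w_\lambda(\cdot,t)\|_{L^2}\to0$ for every $t_1>0$, while integrating the same identity yields $\int_{t_1}^{t_2}\|w_\lambda\|_{W^{1,2}}^2\to0$. Interpolating this $L^2$-smallness against the uniform spatial $C^{0,1-n/q}$ bound coming from $\|\vtau\|_{W^{1,q}}$, I conclude $w_\lambda\to0$ in $C_{\rm loc}(\obar\times(0,\infty))$ and in $L^2_{\rm loc}((0,\infty);W^{1,2}(\Omega))$.

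On the other hand, elliptic regularity together with $\utaunj\to u$ in $C_{\rm loc}$ gives $V_{\lambdanj}\to v:=(I-\Delta)^{-1}u$ in $C_{\rm loc}(\obar\times(0,\infty))$ and in $L^2_{\rm loc}((0,\infty);W^{1,2}(\Omega))$; adding the two limits produces the asserted convergence $\vtaunj\to v$. Passing to the limit in the weak formulation of the first equation is then routine: $\utaunj\to u$ uniformly, $\chi(\vtaunj)\to\chi(v)$ by continuity, and $\nabla\vtaunj\to\nabla v$ in $L^2_{\rm loc}$, so the flux converges and $u$ is a weak solution of $u_t=\Delta u-\nabla\cdot(u\chi(v)\nabla v)$, while $v=(I-\Delta)^{-1}u$ means exactly $0=\Delta v-v+u$. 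Finally I would bootstrap regularity: $u\in C^\theta$ forces $v\in C^{2+\theta}$ in space by elliptic Schauder theory, and feeding this back into the $u$-equation, parabolic Schauder theory promotes $u$ to $C^{2,1}_{\rm loc}(\obar\times(0,\infty))$, so that $(u,v)$ solves \eqref{cpp-e} classically with the claimed regularity. I expect the main obstacle to be the uniform temporal control of $\vtau$ encapsulated in the bound for $(V_\lambda)_t$ and the ensuing exponential relaxation estimate, since this is precisely where the singular term $\lambda(\vtau)_t$ must be tamed.
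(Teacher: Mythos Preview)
Your argument is correct. The opening step---parabolic H\"older regularity for $u_\lambda$ followed by Arzel\`a--Ascoli---is exactly what the paper does. For $v_\lambda$, however, the paper merely extracts a weak-$*$ limit in $L^\infty(0,\infty;W^{1,q}(\Omega))$ and then defers entirely to the machinery of Wang--Winkler--Xiang \cite{Wang-Winkler-Xiang} for the strong convergences and the identification of the limit equation. Your route is more self-contained and structurally different: by decomposing $v_\lambda=(I-\Delta)^{-1}u_\lambda+w_\lambda$ and exploiting the uniform $L^\infty$ bound on $(V_\lambda)_t$ (which follows because the flux lies in $L^q$ with $q>n$ and $(I-\Delta)^{-1}\nabla\cdot:L^q\to W^{1,q}\hookrightarrow L^\infty$), you obtain the explicit relaxation estimate $\|w_\lambda(\cdot,t)\|_{L^2}^2\lesssim e^{-t/\lambda}\|w_\lambda(\cdot,0)\|_{L^2}^2+C\lambda^2$. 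This forces $w_\lambda\to0$ strongly on every interval $[t_1,T]$ and in $L^2(t_1,T;W^{1,2}(\Omega))$ without any Aubin--Lions type argument, and it identifies $v=(I-\Delta)^{-1}u$ \emph{a priori} rather than recovering the elliptic relation after passing to the limit. The compactness approach in the cited reference is presumably more robust when no such pointwise bound on $(V_\lambda)_t$ is available, whereas your singular-perturbation decomposition exploits the favourable structure here to yield a quantitative rate in~$\lambda$.
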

\begin{remark}
This lemma also implies that 
if $\chi$ satisfies \eqref{mainass} and \eqref{mainass3} then 
global existence and boundedness in \eqref{cpp-e} hold. 
\end{remark}
\begin{proof}
From the assumption in this section 
and the standard parabolic regularity argument \cite[Theorem 1.3]{Porzio-Vespri}
we see that $\{\utau\}_{\itau}$ is bounded in 
$C^{\alpha,\frac{\alpha}{2}}_{\rm loc}(\ol{\Omega}\times[0,\infty))$ 
with some $\alpha\in (0,1)$. 
Thus the Arzel\`{a}--Ascoli theorem 
and the boundedness of $\|\nabla \vtau\|_{L^\infty(0,\infty;W^{1,q}(\Omega))}$ yields that we can find a subsequence 
$\lambdanj \searrow 0$ and functions 
\[
  u\in C^{\alpha,\frac{\alpha}{2}}_{\rm loc}
    (\ol{\Omega}\times [0,\infty)) 
\quad \mbox{and} \quad 
  v\in L^\infty (0,\infty;W^{1,q}(\Omega)) 
\]
satisfying 
\begin{align*}
 \utaunj \to u \quad 
 \mbox{in}\ C_{\rm loc}(\obar\times [0,\infty)) 
\quad \mbox{and} \quad 
 \vtaunj \overset{\ast}{\rightharpoonup} v \quad \mbox{in}\ 
 L^\infty (0,\infty;W^{1,q}(\Omega)) 
\end{align*}
as $j\to \infty$. 
Then arguments similar to those in 
the proof of \cite[Theorem 1.1]{Wang-Winkler-Xiang} 
enable us to attain this lemma.  
\end{proof}

%
%
%
%
We next verify the following lemma which implies that 
the pair of functions $(u,v)$ provided 
by Lemma \ref{lem;boundedinholder} is independent of a choice of 
a sequence $\lambda_n\searrow 0$. 

\begin{lem}\label{uniqueness}
A solution $(\uobar,\vobar)$ of \eqref{cpp-e} satisfying 
\[
  \uobar\in C(\obar\times [0,\infty))\cap 
  C^{2,1}(\obar\times (0,\infty)) 
\ \mbox{and} \ 
  \vobar \in C^{2,0}(\obar\times (0,\infty))\cap 
  L^\infty([0,\infty);W^{1,q}(\Omega)) 
\]
is unique. 
\end{lem}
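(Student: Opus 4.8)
The plan is to prove uniqueness by a standard energy-estimate argument. Suppose $(\ubar_1,\vbar_1)$ and $(\ubar_2,\vbar_2)$ are two solutions of \eqref{cpp-e} in the stated regularity class sharing the same initial data $\uinit$. Set $U:=\ubar_1-\ubar_2$ and $V:=\vbar_1-\vbar_2$. The goal is to control $\|U\cd\|_{L^2(\Omega)}$ on an arbitrary finite interval $(0,T)$ by a Gronwall-type inequality and conclude $U\equiv 0$, whence $V\equiv 0$ follows from the elliptic equation.

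First I would subtract the two $u$-equations and test with $U$ itself. Integration by parts gives
\[
  \frac{1}{2}\frac{d}{dt}\into U^2
  = -\into |\nabla U|^2
  + \into \bigl(\ubar_1\chi(\vbar_1)\nabla\vbar_1 - \ubar_2\chi(\vbar_2)\nabla\vbar_2\bigr)\cdot\nabla U.
\]
The cross term I would split by adding and subtracting intermediate products, writing
\[
  \ubar_1\chi(\vbar_1)\nabla\vbar_1-\ubar_2\chi(\vbar_2)\nabla\vbar_2
  = U\,\chi(\vbar_1)\nabla\vbar_1
  + \ubar_2\bigl(\chi(\vbar_1)-\chi(\vbar_2)\bigr)\nabla\vbar_1
  + \ubar_2\chi(\vbar_2)\nabla V.
\]
The three resulting integrals are then estimated using the uniform bounds available on the limit solution: by Lemma \ref{lem;boundedinholder} one has $\ubar_i\in L^\infty$ and $\vbar_i\in L^\infty(0,\infty;W^{1,q}(\Omega))$ with $q>n$, and $\chi$ is Lipschitz on $[\eta,\infty)$ by \eqref{mainass}, so $|\chi(\vbar_1)-\chi(\vbar_2)|\le L|V|$. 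Combined with the lower bound $\vbar_i\ge\eta$ from Lemma \ref{loweresti}, each factor of $\chi$, $\chi'$, $\nabla\vbar_i$ is bounded on $(0,T)$. Applying Hölder and Young's inequalities, every term is absorbed either into $\tfrac{1}{2}\into|\nabla U|^2$ or into multiples of $\into U^2$ and $\into V^2$.

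The elliptic equation is what links $V$ back to $U$: subtracting the two $v$-equations yields $0=\Delta V - V + U$ with Neumann data, so testing with $V$ gives $\into|\nabla V|^2 + \into V^2 = \into UV \le \tfrac{1}{2}\into U^2 + \tfrac{1}{2}\into V^2$, hence $\|V\cd\|_{H^1(\Omega)}^2\le C\|U\cd\|_{L^2(\Omega)}^2$. Substituting this elliptic bound into the $u$-estimate closes the loop and produces $\frac{d}{dt}\into U^2 \le C\into U^2$ with $U(\cdot,0)=0$; Gronwall's inequality then forces $U\equiv0$ on $(0,T)$, and since $T$ is arbitrary, $\ubar_1=\ubar_2$ and consequently $\vbar_1=\vbar_2$.

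The main obstacle will be the first cross term $\into U\,\chi(\vbar_1)\nabla\vbar_1\cdot\nabla U$, where $\nabla\vbar_1$ lives only in $L^q$ rather than $L^\infty$. Here I would estimate $\into |U|\,|\nabla\vbar_1|\,|\nabla U| \le \|\nabla\vbar_1\|_{L^q}\|U\|_{L^{2q/(q-2)}}\|\nabla U\|_{L^2}$ and then interpolate the middle norm via the Gagliardo--Nirenberg inequality between $\|U\|_{L^2}$ and $\|\nabla U\|_{L^2}$; since $q>n\ge 2$, the relevant exponents are admissible and Young's inequality lets the $\|\nabla U\|_{L^2}$ contribution be absorbed by the dissipation $-\into|\nabla U|^2$, leaving a controllable $\into U^2$ term. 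This is the only place where the precise integrability $q>n$ is genuinely needed.
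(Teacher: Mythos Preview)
Your energy/Gronwall argument is correct and is essentially the approach the paper takes: the paper's own proof consists of a one-line reference to \cite[Lemma~2.1]{stinner_tello_winkler}, which is precisely the $L^2$-difference estimate you have written out in detail. One small caveat on attributions: Lemma~\ref{loweresti} is stated only for $\lambda>0$, so the lower bound $\vbar_i\ge\eta$ in the elliptic case needs a separate (easy) justification via positivity of the Neumann Green function for $-\Delta+1$ together with mass conservation; and the $L^\infty$ bounds on $\ubar_i$ should be drawn directly from the assumed regularity $\ubar_i\in C(\overline{\Omega}\times[0,T])$ rather than from Lemma~\ref{lem;boundedinholder}, which concerns only the specific limit solution and not arbitrary solutions in the stated class.
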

\begin{proof}
Let $({\uobar}_1,{\vobar}_1)$ and $({\uobar}_2,{\vobar}_2)$ be solutions to \eqref{cpp-e}  
and put $y(x,t):= \uobar_1(x,t)-\uobar_2(x,t)$ for $(x,t)\in \Omega \times (0,\infty)$.  
Then an argument similar to that in 
the proof of \cite[Lemma 2.1]{stinner_tello_winkler} 
implies that $y(x,t)=0$. Thus we can obtain  this lemma. 
\end{proof}

%
%
%
%
%
%
Finally we shall establish convergence of 
the solution $(\utau,\vtau)$ for \eqref{cp} as $\lambda\searrow 0$. 
\begin{lem}\label{lem;convergence;final}
The solution $(\utau,\vtau)$ of \eqref{cp} with $\lambda\in (0,\lambda_0)$ 
satisfies that for all $T>0$ 
\begin{align*}
 &\utau \to u \quad 
 \mbox{in}\ C_{\rm loc}(\obar\times [0,\infty)), 
\\
 &\vtau \to v \quad \mbox{in}\ 
 C_{\rm loc}(\obar\times (0,\infty)) \cap L^2_{\rm loc}((0,\infty);W^{1,2}(\Omega)) 
\end{align*}
as $\lambda\searrow 0$, where $(u,v)$ is the solution of \eqref{cpp-e} 
provided by Lemma \ref{lem;boundedinholder}. 
\end{lem}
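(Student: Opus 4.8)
The plan is to promote the subsequential convergence of Lemma~\ref{lem;boundedinholder} to convergence of the entire family $(\utau,\vtau)$ as $\lambda\searrow 0$, leveraging the uniqueness from Lemma~\ref{uniqueness}. The guiding principle is the standard fact that, in a metrizable setting, a family converges to a limit $L$ as soon as every sequence drawn from it possesses a subsequence converging to $L$. All relevant modes of convergence here are metrizable: uniform convergence on the compact sets $\obar\times[0,T]$ and $\obar\times[\tau,T]$, and convergence in $L^2((\tau,T);W^{1,2}(\Omega))$, for $0<\tau<T<\infty$. Hence it suffices to work with arbitrary sequences $\lambdan\searrow 0$.

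I would argue by contradiction, treating the $u$-component first. Let $(u,v)$ be the pair produced in Lemma~\ref{lem;boundedinholder}; by Lemma~\ref{uniqueness} it is the unique solution of \eqref{cpp-e} in the indicated class, so it does not depend on the approximating sequence. Suppose $\utau\not\to u$ in $C_{\rm loc}(\obar\times[0,\infty))$ as $\lambda\searrow0$. Then there are $T>0$, $\ep>0$ and a sequence $\lambdan\searrow 0$ with
\[
  \|u_{\lambdan}-u\|_{C(\obar\times[0,T])}\ge\ep\qquad\text{for all }n\in\mathbb{N}.
\]
Applying Lemma~\ref{lem;boundedinholder} to $\{\lambdan\}$ produces a subsequence $\{\lambdanj\}$ and functions solving \eqref{cpp-e}; by Lemma~\ref{uniqueness} these functions coincide with $(u,v)$, so $\utaunj\to u$ in $C_{\rm loc}(\obar\times[0,\infty))$ and in particular $\|\utaunj-u\|_{C(\obar\times[0,T])}\to0$ as $j\to\infty$. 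This contradicts the bound above along the subsequence, and the desired convergence of $\utau$ follows.

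The same contradiction scheme, run separately for each remaining mode, yields $\vtau\to v$ in $C_{\rm loc}(\obar\times(0,\infty))$ and in $L^2_{\rm loc}((0,\infty);W^{1,2}(\Omega))$: in each case a putative failure of convergence produces a sequence $\lambdan\searrow0$ bounded away from $v$ on some set $\obar\times[\tau,T]$, respectively in some space $L^2((\tau,T);W^{1,2}(\Omega))$, and Lemmas~\ref{lem;boundedinholder} and~\ref{uniqueness} again force a subsequence to converge to $v$, a contradiction.

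I do not expect a genuine obstacle: the substantive work — compactness, passage to the limit in the PDE, and uniqueness of the limiting problem — has already been carried out in Lemmas~\ref{lem;boundedinholder} and~\ref{uniqueness}. The one point demanding care is the sequence-independence of the limit, which is exactly what Lemma~\ref{uniqueness} secures; were different sequences $\lambdan\searrow0$ permitted to generate different limits, the contradiction argument would collapse. Thus the role of this final lemma is purely to package the preceding compactness-and-uniqueness results into convergence of the full family, thereby completing the proof of Theorem~\ref{mainth3}.
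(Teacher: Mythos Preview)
Your proposal is correct and takes essentially the same approach as the paper: the paper's proof simply states that Lemmas~\ref{lem;boundedinholder} and~\ref{uniqueness} together imply that every sequence $\lambda_n\searrow 0$ has a subsequence along which $(u_{\lambda_n},v_{\lambda_n})$ converges to the same pair $(u,v)$, which yields full convergence. Your contradiction argument is just the explicit unpacking of this subsequence-plus-uniqueness principle.
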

\begin{proof}
Lemmas \ref{lem;boundedinholder} and \ref{uniqueness} yield that 
there exists the pair of the functions $(u,v)$ such that 
for any sequences $\{\lambda_n\}_{n\in\mathbb{N}}\subset (0,\lambda_0)$ 
satisfying $\lambda_n\searrow 0$ as $n\to \infty$ 
there is a subsequence $\lambda_{n_j} \searrow 0$ 
such that for all $T>0$, 
\begin{align*}
 &\utaunj \to u \quad 
 \mbox{in}\ C_{\rm loc}(\obar\times [0,\infty)), 
\\
 &\vtaunj \to v \quad \mbox{in}\ 
 C_{\rm loc}(\obar\times (0,\infty)) \cap L^2_{\rm loc}((0,\infty);W^{1,2}(\Omega))
\end{align*}
as $j\to \infty$, which enables us to see this lemma. 
\end{proof}

\begin{proof}[{\rm \bf Proof of Theorem \ref{mainth3}}]
Lemma \ref{lem;convergence;final} directly shows Theorem \ref{mainth3}. 
\end{proof}
%
\smallskip
\section*{Acknowledgments}
The author would like to thank Professor Michael Winkler 
for pointing out mistakes in 
the previous proof of Theorem 1.3, 
and moreover, 
appreciated his modifying arguments in his joint paper 
with Professors Yulan Wang and Zhaoyin Xiang 
(\cite{Wang-Winkler-Xiang}).



\end{document}